\documentclass[11pt, twoside]{article}
\usepackage{amssymb, amsmath, amsthm}
\usepackage[top=30mm, left=25mm, right=25mm, bottom=22mm]{geometry}
\usepackage{fancyhdr}
\usepackage{tikz}
\usetikzlibrary{positioning}
\usepackage{titling}
\usepackage{hyperref}
\predate{}
\postdate{}
\usepackage{lipsum}
\newtheorem{theorem}{Theorem}
\newtheorem{lemma}[theorem]{Lemma}

\newtheorem{claim}{Claim}

\newtheorem{conjecture}[theorem]{Conjecture}
\usepackage{titling}
\usepackage{xcolor}
\usepackage{placeins}
\usepackage{tikz}
\usepackage{verbatim}
\usepackage{titling}
\usepackage[T1]{fontenc}
\usepackage{currvita}
\usepackage{bbm}
\usepackage{enumitem}

\theoremstyle{definition}

\theoremstyle{remark}

\newtheorem*{theorem*}{Theorem}
\usepackage[T1]{fontenc}
\usepackage[utf8]{inputenc}
\usepackage{lmodern}
\usepackage{listings}
\usepackage[capitalise,noabbrev]{cleveref}
\begin{document}
\newcommand{\Addresses}{
\bigskip
\footnotesize

\medskip

\noindent Maria-Romina~Ivan, \textsc{Department of Pure Mathematics and Mathematical Statistics, Centre for Mathematical Sciences, Wilberforce Road, Cambridge, CB3 0WB, UK.} \par\noindent\nopagebreak\textit{Email address: }\texttt{mri25@dpmms.cam.ac.uk}

\medskip

\noindent Bernardus~A.~Wessels, \textsc{School of Mathematics, Georgia Institute of Technology, Atlanta, GA 30332, USA.}\par\noindent\nopagebreak\textit{Email address: }\texttt{bwessels3@gatech.edu}
\normalsize}
\pagestyle{fancy}
\fancyhf{}
\fancyhead [LE, RO] {\thepage}
\fancyhead [CE] {MARIA-ROMINA IVAN AND BERNARDUS A. WESSELS}
\fancyhead [CO] {A NEW LOWER BOUND FOR THE DIAGONAL POSET RAMSEY NUMBERS}
\renewcommand{\headrulewidth}{0pt}
\renewcommand{\l}{\rule{6em}{1pt}\ }
\title{\Large{\textbf{A NEW LOWER BOUND FOR THE DIAGONAL POSET RAMSEY NUMBERS}}}
\author{MARIA-ROMINA IVAN AND BERNARDUS A. WESSELS}
\date{ }
\maketitle
\begin{abstract}
Given two finite posets $\mathcal P$ and $\mathcal Q$, their Ramsey number, denoted by $R(\mathcal P,\mathcal Q)$, is defined to be the smallest integer $N$ such that any blue/red colouring of the vertices of the hypercube $Q_N$ has either a blue induced copy of $\mathcal P$, or a red induced copy of $\mathcal Q$.

Axenovich and Walzer showed that, for fixed $\mathcal P$, $R(\mathcal P, Q_n)$ grows linearly with $n$. However, for the diagonal question, we do not even come close to knowing the order of growth of $R(Q_n,Q_n)$. The current upper bound is $R(Q_n,Q_n)\leq n^2-(1-o(1))n\log n$, due to Axenovich and Winter.

What about lower bounds? It is trivial to see that $2n\leq R(Q_n,Q_n)$, but surprisingly, even an incremental improvement required significant work. Recently, an elegant probabilistic argument of Winter gave that, for large enough $n$, $R(Q_n,Q_n)\geq 2.02n$.

In this paper we show that $R(Q_n,Q_n)\geq 2.7n+k$, where $k$ is a constant. Our current techniques might in principle show that in fact, for every $\epsilon>0$, for large enough $n$, $R(Q_n,Q_n)\geq (3-\epsilon)n$. Our methods exploit careful modifications of layered-colourings, for a large number of layers. These modifications are stronger than previous arguments as they are more constructive, rather than purely probabilistic. 
\end{abstract}

\section{Introduction}

Given two finite posets, $\mathcal P$ and $\mathcal Q$, we denote by $R(\mathcal P,\mathcal Q)$ the minimum integer $N$ such that any blue/red copy of the hypercube $Q_N$ contains either a blue induced copy of $\mathcal P$, or a red induced copy of $\mathcal Q$. $R(\mathcal P,\mathcal Q)$ is known as the \textit{Ramsey number of posets $\mathcal P$ and $\mathcal Q$}.

Since any finite poset can be embedded in a hypercube, one of the most natural questions is: what is $R(Q_m, Q_n)$? When $m$ is fixed and $n$ grows, these are known as the \textit{off-diagonal} Ramsey numbers for posets, and $R(Q_n,Q_n)$ is known as the \textit{diagonal} Ramsey numbers for posets. These notions were first studied by Axenovich and Walzer in 2017 \cite{axenovichBooleanLatticesRamsey2017}. They also established the first bounds, namely that for any positive integers $m,n$ we have  $$n+m\leq R(Q_m,Q_n)\leq mn+m+n.$$
This already shows that the off-diagonal Ramsey numbers have linear growth. However, it places $R(Q_n, Q_n)$ between $2n$ and $n^2+2n$, and there is much desire to close the gap between these bounds.

The most recent upper bound is due to Axenovich and Winter \cite{log} who showed that $$R(Q_n,Q_n)\leq n^2-\left(1-\frac{2}{\sqrt{\log n}}\right)n\log n.$$
On the other hand, the lower bound is notoriously more elusive. While $2n$ is trivially achieved by colouring the sets of $Q_{2n-1}$ blue if the size is at most $n-1$, and red otherwise (there is no monochromatic chain on length $n+1$, let alone an entire $Q_n$), any little improvement took time, clever optics and new techniques.

First, Cox and Stolee improved the upper bound to $2n+1$ for $n\geq 13$ and $3\leq n\leq8$ \cite{cox}. Their colouring still was blue up to a certain level, and red after a certain level, except that in-between they managed to add one extra level using probabilistic methods. Their bound was later achieved by Bohman and Peng for all $n\geq3$, this time by an explicit colouring \cite{constructible}. Moreover, Gr\'osz, Methuku and Thompkins showed that, in fact, for any positive integers $n,m$, $R(Q_m,Q_n)\geq m+n+1$, which is surprisingly still the best lower bound for the off-diagonal case \cite{off-diagonal}.

Recently, using clever probabilistic methods, Winter \cite{winter} achieved the best lower bound yet for diagonal Ramsey numbers for posets, namely that for large enough $n$, $$R(Q_n,Q_n)\geq2.02n.$$
In this paper we show the following.
\begin{theorem}
There exists a constant $k$ such that $R(Q_n,Q_n)>2.7n+k$.
\end{theorem}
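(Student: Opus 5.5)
The plan is to exhibit, for $N=\lfloor 2.7n\rfloor+k$, an explicit blue/red colouring of $Q_N$ with no monochromatic induced copy of $Q_n$. It will be a modified layered colouring. We split the levels $0,\dots,N$ into many consecutive bands of levels (the number of bands growing as a large constant $t$), alternating blue and red, with roughly
\[
B_1,R_1,B_2,R_2,\dots,B_t,R_t .
\]
The basic tool is the chain obstruction: an induced copy of $Q_n$ contains a chain of length $n+1$, so every monochromatic copy needs $n+1$ distinct levels in its colour. We also use the stronger fact that an induced copy of $Q_n$ is "spread out". For an embedding $\phi$ and any element $x$ at level $i$ of $Q_n$, the $\binom{n}{i}$ images of the level-$i$ elements form an antichain. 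Moreover, the images of $\emptyset$ and $[n]$ differ in at least $n$ coordinates, and for each singleton $\{j\}$ the set $\phi(\{j\})\setminus\phi(\emptyset)$ contains a coordinate private to it. So there are $n$ distinct "directions", and every maximal chain of $Q_n$ must pick up its $n$ steps inside levels of its colour.

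The key step is to make the bands thin enough that a single colour class cannot host $n+1$ levels along a chain while jumping over the other colour. Pure alternating layers fail, since a chain can simply skip the opposite-colour layers. So each band is modified, following Cox--Stolee and Bohman--Peng style refinements. Inside a blue band we recolour red those sets that are "extremal" with respect to a fixed structure. For example, fix a partition of the ground set $[N]$ into blocks $X_1,\dots,X_t$ (with $|X_i|\approx 2.7n/t$), and colour a set $S$ according to the profile $(|S\cap X_1|,\dots,|S\cap X_t|)$, via a threshold rule that depends on both the total level and the block counts.

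We then argue with the $n$ private directions of the embedded cube. By pigeonhole, the directions distribute among the blocks. A monochromatic blue $Q_n$ must grow from $\phi(\emptyset)$ to $\phi([n])$ while staying blue, and the threshold rule forces its "profile walk" to advance in a block only while the block stays below its blue threshold. Summing over blocks gives at most about $n\cdot(2.7/3)$ usable blue steps plus an error of $O(t)$. This is less than $n$, which is a contradiction. The red case is symmetric under complementation, provided the rule is designed self-dual. Optimising the thresholds is what should yield the constant $2.7$, with $3-\epsilon$ as $t\to\infty$ in the ideal analysis.

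The main obstacle is the step where we pass from a chain argument to a genuinely cube-based counting. Chains alone can never beat $2n$, so the argument must exploit the fact that the $n$ atoms of $Q_n$ give $n$ pairwise distinct private coordinates. It must also show that any placement of these atoms into the blocks forces some maximal chain, chosen adaptively by ordering the directions block by block, to cross an opposite-coloured region. To handle this, we would first choose the chain greedily: add directions in the block where the current colour threshold is furthest from being exceeded. We would then do a potential-function analysis of the profile, and absorb boundary effects into the constant $k$.
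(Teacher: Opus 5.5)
\textbf{There is a genuine gap.} The proposal never defines the colouring: the ``threshold rule'' on the profile $v(S)=(|S\cap X_1|,\dots,|S\cap X_t|)$ is left unspecified. The decisive claim, that a monochromatic copy has only about $\frac{2.7}{3}n+O(t)$ usable steps, is asserted rather than derived. Your last paragraph concedes that the step turning the cube structure into a forced skip is not done.

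More seriously, the mechanism you describe is not one that embeddings have to obey. By Lemma~\ref{lem:embedding}, every copy can be written as $\phi'(W)=W\cup g(W)$ for $W\subseteq X$, where $|X|=n$ and $g$ is an arbitrary monotone map into subsets of $[N]\setminus X$. Your $n$ ``private directions'' are exactly $X$, and it is the embedder, not you, who chooses how they are spread among the blocks. The profile of $\phi'(W)$ is $v(W)+v(g(W))$. So a step $W\to W\cup\{x\}$ may add an essentially arbitrary nonnegative vector to the profile, on top of the unit step in the block containing $x$. Hence the profile walk need not ``advance in a block only while below threshold''. It can jump over any threshold region in any block, which is exactly how chains defeat layered colourings. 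You would also have to explain why a colouring invariant under $\prod_j\mathrm{Sym}(X_j)$ forces a skip for every choice of $X$ and $g$, including choices aligned with the blocks. Nothing in the proposal addresses this. The claimed $3-\epsilon$ limit is likewise unsupported; the paper only conjectures it.

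\textbf{What the paper uses instead} is a pair of pivot families $\mathcal S_i,\mathcal T_i$ for each pair of consecutive layers. Below $N/2$, in the $i$-th pair:
\begin{itemize}
\item a set in the lower layer is blue iff it contains some $S\in\mathcal S_i$;
\item a set in the upper layer is red iff it lies inside some $T\in\mathcal T_i$;
\item no $S\in\mathcal S_i$ is contained in any $T\in\mathcal T_i$.
\end{itemize}
Consequently every set above a pivot is blue throughout the pair.

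Suppose a red copy has reached $\phi'(W)\supseteq P$ with $|P|=l_in$. Universality of $\mathcal S_i$ gives $S$ with $P\cap X\subseteq S\cap X$, $S\setminus X\subseteq P\setminus X$ and $|S\cap X|\le |P\cap X|+r_in$. This forces $S\subseteq\phi'(S\cap X)$. So the red set $\phi'(S\cap X)$ must reach the top of the pair: the copy skips $c_in$ levels while spending only $r_in$ new directions. The blue case is symmetric, using $\mathcal T_i$ and complements.

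This must hold for all exponentially many pairs $(P,X)$, so the pivots are built probabilistically. $\mathcal T_i$ is random with density $q^n$, and $\mathcal S_i$ is chosen from each cone after a deletion argument, subject to the intersection and probability constraints. Numerical optimisation with $L=150$ (602 layers) then gives $c=2\sum_i c_i=0.7$. A universality statement of this kind, plus the accompanying computation, is what your approach is missing.
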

Our blueprint for this result can, in principle, give a lower bound of $(3-\epsilon)n$ for every fixed $\epsilon>0$ and $n$ large enough. We explore the absolute limitations of this method at the end of the paper.

Before explaining the strategy, we introduce some standard terminology. For a natural number $n$, we define $[n] = \{1, 2, \dots, n\}$. The \textit{$n$-dimensional hypercube}, denoted  by $Q_n$, is the poset consisting of all subsets of $[n]$ ordered by inclusion. An embedding of a poset $\mathcal{P}$ into a poset $\mathcal{Q}$ is an injective map $\phi: \mathcal{P} \to \mathcal{Q}$ that preserves the poset structure -- that is, $x \le_{\mathcal{P}} y$ if and only if $\phi(x) \le_{\mathcal{Q}} \phi(y)$. For $0\leq i\leq N$, we refer to the collection of subsets of $[N]$ of size $i$, denoted by $[N]^{(i)}$, as the \textit{$i^\text{th}$ level} of $Q_N$. A colouring of $Q_N$ is called \textit{layered} if sets of the same size have the same colour. The number of layers is the number of colour transitions, plus one, when going from the empty set to the full set. A \textit{layer} is a maximal collection of monochromatic levels of $Q_N$.

Our proof is inspired by Winter's proof in which a $4$-layered colouring is modified with the help of two families of sets, called \textit{pivots}. Our strategy is to start with a large number of layers, pair them up, and inside each pair modify the colouring according to two families of pivots tailored for said pair. The pivots (and their parameters) are designed to force a monochromatic embedding to `skip' a certain number of levels. If one manages to force a monochromatic embedding to skip enough levels in total, there is, of course, not enough levels left to embed a monochromatic $Q_n$.

In Winter's proof, the existence of pivots relies exclusively on probabilistic methods. Our proof is more constructive in the sense that we pinpoint exactly one type of families of pivots. This gives more control and consequently tighter bounds. It turns out that starting with 602 layers is enough to achieve the claimed $2.7n$ bound. Moreover, for an arbitrary number of initial layers, we pinpoint the exact restrictions that need to be satisfied in order for the proof to go through, which suggests that the more layers, the better the bound, although not exceeding $3n$.

The plan of the paper is as follows. In Section 2, to illustrate our methodology without overly complex parameters, we establish a weaker lower bound of $(2+1/3)n$, assuming the existence of pivot sets, for a 6-layered initial colouring. In Section 3, the most technical part of the paper, we rigorously prove the existence of the necessary pivot sets (Lemma~\ref{lem:pivot_points} and Lemma~\ref{lem:above_pivots}). Finally, in Section 4, we present the proof of our main result, for which the parameters were found using a numerical optimiser (see Appendix), as well as establish the absolute technical limitations of this type of proof strategy.
\section{A lower bound of $(2+\frac{1}{3})n$}
In this section, we show the following.
\begin{theorem}\label{thm:qnqn_lower}
For sufficiently large $n$, $R(Q_n,Q_n)>\frac{7}{3}n-\frac{413}{3}$.
\end{theorem}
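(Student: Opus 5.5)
We will exhibit, for $N=\lfloor\frac{7}{3}n-\frac{413}{3}\rfloor$, a blue/red colouring of $Q_N$ with no monochromatic induced copy of $Q_n$. The construction starts from a $6$-layered colouring $B_1R_1B_2R_2B_3R_3$ (alternating blue and red, going up from $\emptyset$). The layers are grouped into three consecutive pairs $(B_1,R_1)$, $(B_2,R_2)$, $(B_3,R_3)$, and the colouring is modified near the boundary of each pair using two families of pivot sets. As announced, in this section we take the pivot families as given (their existence is Lemma~\ref{lem:pivot_points} and Lemma~\ref{lem:above_pivots}). Roughly, the modification recolours a set $X$ at the top of $B_i$ red, or at the bottom of $R_i$ blue, according to whether $X$ contains (respectively, is contained in) a pivot.

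The layer widths are chosen as fractions of $n$, with thickness about $\frac{7}{18}n$ each, up to additive constants. The core argument is a ``skipping'' lemma. Suppose $\phi:Q_n\to Q_N$ is a monochromatic, say blue, induced embedding. Look at the chain of levels that $\phi$ occupies, namely the images of a maximal chain $\emptyset=A_0\subset A_1\subset\dots\subset A_n$. We will show that inside each pair the embedding wastes a definite number of levels beyond the trivial amount. If $\phi$ passes from one blue layer to the next, it must jump over the entire red layer in between. The pivot modification forces more: either it must also skip about $\frac{n}{6}$ extra levels near the boundary, or the image of some element lands on a recoloured set of the wrong colour.

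To prove this, fix a pair and let $A$ be the last element of the chain embedded below the pivot region. Using the properties guaranteed by the pivot lemmas, we derive a contradiction with blueness whenever the embedding enters the region too early. One property is that every large-enough set above $\phi(A)$ in the critical levels contains a pivot. Another is that the upper set of a pivot cannot contain a sub-cube image of the required dimension. Here the induced structure of $Q_n$ is used: the images of $A\cup\{j\}$ for the $n-|A|$ remaining coordinates $j$ are pairwise incomparable and have distinct ``new'' elements, which is what lets the pivot families catch them.

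Summing up, a blue copy of $Q_n$ must use at least $n+1$ levels, plus the full red layers it crosses, plus the forced skips. With the chosen widths this total exceeds $N+1$, and the same holds by symmetry (complementation) for red copies. The arithmetic of the widths produces the constant $\frac{413}{3}$, which absorbs rounding and the additive constants in the pivot lemmas. The step I expect to be the main obstacle is the skipping lemma itself: making precise how the pivot families force skipped levels for an arbitrary induced embedding, not only for a layered one. The embedded chain need not sit on consecutive levels, and different branches of the cube may enter the modified zone at different heights. My first attempt will be to apply the argument separately to each maximal chain and to use the element of $Q_n$ whose image first reaches the modified zone as the anchor.
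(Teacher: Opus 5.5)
There is a genuine gap, and it is exactly the step you flag: the ``skipping lemma'' is never proved, and the chain-and-levels framework you set it in cannot prove it. The pivots have to do all of the work. Without modification, your six layers of width $\frac{7}{18}n$ already contain a blue induced $Q_n$: there are $\frac76 n>n+1$ blue levels, and $S\mapsto S\cup Y_{|S|}$ is an induced embedding whenever $Y_0\subseteq Y_1\subseteq\dots\subseteq[N]\setminus[n]$ are nested sets chosen so that $|S|+|Y_{|S|}|$ is always a blue level. The modification, however, is not layered, and the pivots are sparse: $\mathcal T_1$ is a random family of density $0.525^n$, and no $\mathcal S_1$-pivot may lie below a $\mathcal T_1$-pivot. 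So nothing forces the images of a fixed maximal chain to meet the up-set of any pivot. Arguing chain by chain, anchored at the first element to enter the zone, therefore yields nothing.

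The pivot properties you plan to use are also not available. Lemmas~\ref{lem:pivot_points} and~\ref{lem:above_pivots} do not say that every large enough set above $\phi(A)$ contains a pivot, nor that the up-set of a pivot contains no large subcube image. They only guarantee one suitable pivot for each pair $(P,X)$. Moreover, with pivots on levels $s,t,N-t,N-s$, they are tailored to the paper's colouring, in which only the boundaries $R_1|B_2$ and $R_2|B_3$ are modified and the outer layers of width $\frac n3$ stay pure. They do not apply to your three modified boundaries with equal widths.

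The missing idea is to abandon chains and use the normalised embedding of Lemma~\ref{lem:embedding}: a ground set $X$ with $|X|=n$ and $\phi'(A)\cap X=A$ for all $A\subseteq X$. This gives two things.
\begin{itemize}
\item The bound $|\phi'(A)|\le N-|X\setminus A|$.
\item A way to phrase the pivot conditions relative to $X$ and a set $P$ inside the current image: a pivot $S$ with $S\setminus X\subseteq P\subseteq\phi'(\emptyset)$ satisfies $S\subseteq\phi'(S\cap X)$.
\end{itemize}
For a red copy, $|\phi'(\emptyset)|\ge\frac n3$ supplies $P_1$. The resulting $S_1\in\mathcal S_1$ makes the single element $S_1\cap X$, of size at most $\frac n3$, have an image containing a pivot. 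That image is therefore blue unless its size exceeds $\frac N2$: the up-set of $S_1$ is blue up to level $\frac{2n}{3}+\frac{cn}{4}$, and above that it avoids the down-set of $\mathcal T_1$. Repeating from $\phi'(S_1\cap X)$ with $\mathcal S_2$ gives $A=S_2\cap X$ with $|A|\le\frac{2n}{3}$ but $|\phi'(A)|>N-\frac n3\ge N-|X\setminus A|$, a contradiction. Blue copies are handled by complementation.

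At each step one well-chosen element is pushed upward, so control over all branches is never needed. Note also that $\frac{413}{3}=\frac73\cdot 59$ comes from proving the bound with $N=\frac73 n$ for $60\mid n$ and then using monotonicity in $n$. It does not come from absorbing constants in the pivot lemmas.
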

The proof is heavily based on the existence of certain sets of `pivots'. We first state the lemmas that ensure their existence and use them to achieve the desired lower bound. In the next section, we provide full proofs of these lemmas. We also use the following result about embeddings of $Q_n$ (Theorem 9 in \cite{axenovichBooleanLatticesRamsey2017}). Since the proof is short, we include it here too.

\begin{lemma}[Axenovich-Walzer \cite{axenovichBooleanLatticesRamsey2017}]\label{lem:embedding}
Let $Z$ be a set such that $|Z|>n$, and let  $Q=\mathcal Q(Z)$, the hypercube with ground set $Z$. If there exists an embedding $\phi:Q_n\to Q$, then there exist a subset $X\subset Z$ such that $|X|=n$, and an embedding $\phi':\mathcal{Q}(X)\to Q$, with the same image as $\mathcal\phi$, such that $\phi'(S)\cap X=S$ for all $S\subseteq X$.
\end{lemma}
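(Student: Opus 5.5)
The plan is to use the standard ``coordinate-fixing'' argument of Axenovich and Walzer: construct $X$ greedily together with a shift of the images.

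\textbf{Setup.} Write $\phi:\mathcal Q([n])\to Q$ for the given embedding. For $i\in[n]$, consider the sets
\[
A_i=\phi(\{i\})\setminus\phi(\emptyset),\qquad B_i=\phi([n])\setminus\phi([n]\setminus\{i\}).
\]
Since $\phi$ is an embedding, $\phi(\emptyset)\subsetneq\phi(\{i\})$, so $A_i\neq\emptyset$. Likewise $B_i\neq\emptyset$. The first step is to choose distinct elements $x_i\in Z$ such that, for every $S\subseteq[n]$,
\[
x_i\in\phi(S)\iff i\in S.
\]

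\textbf{Choosing the $x_i$.} Take $x_i\in\phi([n])\setminus\phi([n]\setminus\{i\})$, which is possible because $B_i\neq\emptyset$. We check both directions of the equivalence.
\begin{itemize}
\item If $i\notin S$, then $S\subseteq[n]\setminus\{i\}$. Hence $\phi(S)\subseteq\phi([n]\setminus\{i\})$, so $x_i\notin\phi(S)$.
\item If $i\in S$, we need $x_i\in\phi(S)$. This does not follow from the choice of $x_i$ alone.
\end{itemize}
To get the second direction, refine the choice. Consider the chain $\phi(\{i\})\subseteq\phi(S)$ whenever $i\in S$. So it suffices to take
\[
x_i\in\phi(\{i\})\setminus\phi([n]\setminus\{i\}).
\]
This set is nonempty: otherwise $\phi(\{i\})\subseteq\phi([n]\setminus\{i\})$, and because $\phi$ reflects order, this would force $\{i\}\subseteq[n]\setminus\{i\}$, a contradiction.

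With this choice:
\begin{itemize}
\item $i\in S$ gives $x_i\in\phi(\{i\})\subseteq\phi(S)$;
\item $i\notin S$ gives $x_i\notin\phi([n]\setminus\{i\})\supseteq\phi(S)$.
\end{itemize}
The $x_i$ are distinct, since $x_i\in\phi(\{i\})$ while $x_i\notin\phi(\{j\})$ for $j\neq i$ (because $\{j\}\subseteq[n]\setminus\{i\}$).

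\textbf{Relabelling.} Set $X=\{x_1,\dots,x_n\}$, so $|X|=n$. Let $\psi:\mathcal Q(X)\to\mathcal Q([n])$ be the isomorphism $x_i\mapsto i$, and define $\phi'=\phi\circ\psi$.
\begin{itemize}
\item $\phi'$ is an embedding of $\mathcal Q(X)$ into $Q$, and it has the same image as $\phi$.
\item For $T\subseteq X$, the equivalence above gives $\phi'(T)\cap X=\{x_i: i\in\psi(T)\}=T$.
\end{itemize}
The hypothesis $|Z|>n$ is not actually needed, beyond guaranteeing that $Z$ has room for the $x_i$; their existence is automatic from the argument.

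\textbf{Main obstacle.} The only real point is picking $x_i$ from the right difference set. Using $\phi(\{i\})\setminus\phi([n]\setminus\{i\})$ handles both directions at once. Its nonemptiness relies on the fact that $\phi$ reflects order, i.e.\ that it is an induced embedding.
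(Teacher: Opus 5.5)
Your proof is correct and is essentially the paper's argument. The paper picks $a(i)\in\phi(\{i\})\setminus\bigcup_{A\subseteq[n]\setminus\{i\}}\phi(A)$, which by monotonicity of $\phi$ is exactly your set $\phi(\{i\})\setminus\phi([n]\setminus\{i\})$; it then checks distinctness, relabels, and verifies $\phi'(T)\cap X=T$ just as you do, so your preliminary detour through $A_i$ and $B_i$ can simply be dropped.
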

\begin{proof}
We start by looking at the image of the singletons under the original embedding, namely $\phi(\{i\})$ for all $i\in[n]$. Assume that there exist $i\in[n]$ such that $$\phi(\{i\})\subseteq\bigcup_{A\subseteq([n]\setminus\{i\})}\phi(A).$$ Since $\phi$ is an embedding, we then get $\phi(\{i\})\subseteq\phi([n]\setminus\{i\})$, a contradiction. Therefore, for every $i\in[n]$ there exists $a(i)\in\phi(\{i\})\setminus\bigcup_{A\subseteq([n]\setminus\{i\})}\phi(A)$. We further notice that if $i\neq j$, $\phi(\{j\})$ and $\phi(\{i\})\setminus\bigcup_{A\subseteq([n]\setminus\{i\})}\phi(A)$ are disjoint as $\{j\}\subseteq[n]\setminus\{i\}$. As such, $a(i)\neq a(j)$ if $i\neq j$. Let $X=\{a(i):i\in[n]\}$, which is in bijection with $[n]$. Given $A\subseteq X$, let $\hat{A}=\{a^{-1}(x):x\in A\}$. We therefore define $\phi':\mathcal Q(X)\to Q$ as follows $$\phi'(A)=\phi(\hat{A}).$$
It is clear that $\phi'$ is an embedding with the same image at $\phi$. Moreover, let $x=a(i)\in X$ for some $i\in[n]$, and $A\subseteq X$. If $x=a(i)\in A$, then $i\in\hat{A}$, which implies that $\phi(\{i\})\subseteq\phi(\hat{A})$, thus $x=a(i)\in\phi(\hat{A})=\phi'(A)$. Conversely, if $x=a(i)\in\phi'(A)=\phi(\hat{A})$, which by construction of $a$ implies that $i\in\hat{A}$, thus $a(i)=x\in A$. Therefore, for any $A\subseteq X$ we have that $\phi'(A)\cap X=A.$
\end{proof}

Assume that $n$ is a multiple of 60, and let $c=\frac{1}{3}$ and $N=(2+c)n$. In order to achieve the claimed results, it is enough to provide a colouring of $Q_N$ without a monochromatic copy of $Q_n$, for every $n$ multiple of $60$. This is because if $ 60k\leq n'\leq60k+59$, then $R(Q_{n'}, Q_{n'})\geq R(Q_{60k}, Q_{60k})>(2+c)60k\geq (2+c)(n'-59)=(2+c)n'-59(2+c).$ 

Now let $s=\frac{2n}{3}$ and $t=\frac{2n}{3}+\frac{cn}{2}+hn$, where $h=0.05$. Notice that since $n$ is assumed to be a multiple of 60, all these quantities are natural numbers.
\begin{lemma}\label{lem:pivot_points}
For large enough $n$, divisible by 60, there exist sets $\mathcal S_1\subseteq[N]^{(s)}$ and $\mathcal T_1\subseteq [N]^{(t)}$ with no $S\in\mathcal S_1$ and $T\in\mathcal T_1$ such that $S\subseteq T$, and the following properties:
\begin{enumerate}
\item For any $P,X\subseteq[N]$ with $|P|=\frac{n}{3}$, $|X|=n$ and $P\cap X=\emptyset$, there exists $S\in\mathcal S_1$ such that:
\begin{enumerate}
\item $P\cap X\subseteq S\cap X$
\item $S\setminus X\subseteq P\setminus X$
\item $|{S\cap X}|\leq\frac{n}{3}$.
\end{enumerate}
\item For any $P,X\subseteq[N]$ with $|P|=\frac{N}{2}=n+\frac{cn}{2}$, $|X|=n$, and $|{P\cap X}|= \frac{2n}{3}$, there exists some $T\in \mathcal T_1$ such that:
\begin{enumerate}
\item $T\cap X\subseteq P\cap X$
\item $P\setminus X\subseteq T\setminus X$
\item $|{T\cap X}|\ge \frac{n}{3}$.
\end{enumerate}
\end{enumerate}
\end{lemma}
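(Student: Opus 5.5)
The plan is a random construction with an alteration step. We take $\mathcal S_1$ to be a $p$-random subfamily of $[N]^{(s)}$ and let $\mathcal T_1$ be all $t$-sets containing no member of $\mathcal S_1$. Then no $S\in\mathcal S_1$ lies in any $T\in\mathcal T_1$ by construction, and only the two covering properties need proof. Recall $N=\frac73 n$, $s=\frac23 n$ and $t=\frac23n+\frac16n+0.05n=\frac{53}{60}n$.

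\textbf{Reformulating the two properties.} In property 1, $P\cap X=\emptyset$, so (a) is vacuous. Condition (b) says $S\setminus X\subseteq P$, so $|S\cap X|\ge s-\frac n3=\frac n3$, and then (c) forces equality. Hence property 1 says exactly: for all disjoint $P,X$ with $|P|=\frac n3$, $|X|=n$, some $S\in\mathcal S_1$ has the form $P\cup A$ with $A\in X^{(n/3)}$. There are $\binom{n}{n/3}=2^{(H(1/3)+o(1))n}\approx 2^{0.918n}$ such candidates.

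In property 2, $|P\setminus X|=\frac N2-\frac{2n}3=\frac n2$. The admissible $T$ are $T=(P\setminus X)\cup A\cup B$ with $A\subseteq P\cap X$, $|A|\ge\frac n3$, and $B\subseteq [N]\setminus(P\cup X)$, a set of size $\frac{5n}{6}$. Here $|A|+|B|=t-\frac n2=\frac{23}{60}n$. Taking $|A|=\frac n3$ and $|B|=\frac{n}{20}$ gives at least $\binom{2n/3}{n/3}\binom{5n/6}{n/20}=2^{\gamma n}$ candidates, with $\gamma\approx 0.67+0.27$.

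\textbf{Choice of $p$.} Include each $s$-set in $\mathcal S_1$ independently with probability $p=2^{-\alpha n}$. For property 1, a fixed pair $(P,X)$ fails with probability $(1-p)^{\binom{n}{n/3}}\le\exp(-2^{(0.918-\alpha)n})$. There are at most $4^N$ pairs, so a union bound works for any $\alpha<0.918$.

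For property 2, a fixed $t$-set contains $\binom{t}{s}=2^{(\frac{53}{60}H(\frac{40}{53})+o(1))n}\approx 2^{0.71n}$ sets of size $s$. So a fixed candidate $T$ is "spoiled" (contains a member of $\mathcal S_1$) with probability at most $p\binom ts\approx 2^{-(\alpha-0.71)n}$. We choose $\alpha$ strictly between $0.71$ and $0.918$, say $\alpha=0.8$. The slack $h=0.05$ in $t$ is what keeps $\binom ts$ below $\binom{n}{n/3}$, so I will recheck this entropy inequality carefully.

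\textbf{Main obstacle: concentration for property 2.} Fix $(P,X)$ and let $\mathcal C$ be its candidate family. The number of spoiled candidates is at most $Y=\sum_{S\in\mathcal S_1} w(S)$, where $w(S)$ is the number of members of $\mathcal C$ containing $S$. This is a sum of independent bounded variables with
\[
\mathbb E Y\le p\binom ts|\mathcal C|=2^{-\Omega(n)}|\mathcal C|.
\]
We need $\Pr[Y\ge|\mathcal C|]\le 4^{-N}e^{-n}$ in order to union bound over all pairs. I would apply a Chernoff/Hoeffding bound for weighted sums, which requires the maximum weight $w_{\max}=\max_S w(S)$ to be exponentially smaller than $|\mathcal C|$.

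A candidate containing $S$ must contain $S\cap X$ inside $A$ and $S\setminus X$ inside $(P\setminus X)\cup B$. So $w(S)\le|\mathcal C|\cdot 2^{-\delta n}$ whenever $|S|$ is linear, because fixing $s$ elements of $T$ cuts the count by an exponential factor. The bound is then roughly $\exp(-|\mathcal C|/w_{\max})=\exp(-2^{\delta n})$, which beats $4^N$.

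Computing $\delta>0$ uniformly over all shapes of $S$ is the delicate part. If it fails for some $S$, I would restrict $\mathcal C$ to a sub-family where every $s$-subset has small codegree. Alternatively, I would replace the global random family by a "typed" family, choosing $\mathcal S_1$ according to intersection patterns with a random partition, to gain the needed control.
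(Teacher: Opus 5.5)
Your argument is correct, but it runs in the opposite direction from the paper's.

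The paper makes the upper family random ($\mathcal T_1\subseteq[N]^{(t)}$ with density $0.525^n$). Property 2 is then a plain binomial Chernoff bound on $|\mathcal K_t\cap\mathcal T|$, and all the work is in property 1. From each $s$-cone the paper greedily extracts a subfamily $\mathcal K_s'$ of size $(1.0002)^n$ with pairwise disjoint $t$-neighbourhoods, so that the relevant events are independent, and it then needs $p\binom{N-s}{t-s}<1$. This is where the intersection constraint $c/2+h<2/9$ and the tight comparison $1.9041^n<1.9179^n$ (which requires $h>0$) come from. $\mathcal S_1$ is read off afterwards.

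You randomise the lower family and take $\mathcal T_1$ to be everything avoiding it. The non-containment condition is then automatic and property 1 is a trivial union bound. The correlation problem moves to property 2, which you handle with a weighted Chernoff bound instead of an independent subfamily.

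The step you flagged as delicate is in fact easy, but your stated reason (``$|S|$ is linear'') is not the right one. What matters is that $s$ exceeds $|P\setminus X|+|B|$ by a linear amount. Take $\mathcal C'$ to be the candidates with $|A|=n/3$ and $|B|=n/20$. Any $s$-set $S$ inside some $T\in\mathcal C'$ has $|S\cap A|\ge \frac{2n}{3}-\frac n2-\frac n{20}=\frac{7n}{60}$. Each element of $A$ fixed by $S$ costs a factor at most $\frac12$, so $w(S)\le 2^{-7n/60}|\mathcal C'|$. Hence $\Pr[Y\ge|\mathcal C'|]\le\bigl(e\,p\binom ts\bigr)^{2^{7n/60}}$, which easily beats the $4^N$ union bound.

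Your route needs only two things: $\binom ts\ll\binom{n}{n/3}$ (about $2^{0.71n}$ against $2^{0.918n}$, far more room than the paper's margin), and this codegree margin. There is no analogue of the intersection constraint. Contrary to your remark, the slack $h$ plays no helpful role, since it only enlarges $\binom ts$. With $h=0$ the corresponding two conditions hold for every $c<2/3$, whereas the paper's route is capped at $c<4/9$ in this six-layer scheme.

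What the paper's route buys in exchange is genuinely independent events. It needs nothing beyond a product of probabilities and a binomial Chernoff bound, with no codegree analysis.
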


By taking complements, we also get the following.
\begin{lemma}\label{lem:above_pivots}
For large enough $n$, there exist sets $\mathcal S_2\subseteq[N]^{(N-t)}$ and $\mathcal T_2\subseteq [N]^{(N-s)}$ with no $S\in\mathcal S_2$ and $T\in\mathcal T_2$ such that $S\subseteq T$, and the following properties:
\begin{enumerate}
\item For any $P,X\subseteq[N]$ with $|P|=\frac{N}{2}=n+\frac{cn}{2}$ and $|X|=n$, and $|P\cap X|= \frac{n}{3}$, there exists $S\in \mathcal S_2$ such that:
\begin{enumerate}
\item $P\cap X\subseteq S\cap X$
\item $S\setminus X\subseteq P\setminus X$
\item $|S\cap X|\leq \frac{2n}{3}$.
\end{enumerate}
\item For any $P,X\subseteq[N]$ with $|P|=N-\frac{n}{3}$, $|X|=n$, and $X\subseteq P$, there exists $T\in\mathcal T_2$ such that:
\begin{enumerate}
\item $T\cap X\subseteq P\cap X$
\item $P\setminus X\subseteq T\setminus X$
\item $|T\cap X|\geq \frac{2n}{3}$.
\end{enumerate}
\end{enumerate}
\end{lemma}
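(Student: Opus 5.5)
The plan is to deduce Lemma~\ref{lem:above_pivots} from Lemma~\ref{lem:pivot_points} by complementation in $[N]$. Write $A^c=[N]\setminus A$. Let $\mathcal S_1,\mathcal T_1$ be the families given by Lemma~\ref{lem:pivot_points}, and set
$$\mathcal S_2=\{T^c: T\in\mathcal T_1\}\subseteq[N]^{(N-t)},\qquad \mathcal T_2=\{S^c:S\in\mathcal S_1\}\subseteq[N]^{(N-s)}.$$
Complementation is an order-reversing bijection. Hence $T^c\subseteq S^c$ would force $S\subseteq T$ with $S\in\mathcal S_1$, $T\in\mathcal T_1$, which is forbidden. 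So there is no $S'\in\mathcal S_2$ and $T'\in\mathcal T_2$ with $S'\subseteq T'$.

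For property~1, take $P,X$ with $|P|=N/2$, $|X|=n$ and $|P\cap X|=n/3$. Keep $X$ and apply property~2 of Lemma~\ref{lem:pivot_points} to $P^c$. We have $|P^c|=N-N/2=N/2$ and $|P^c\cap X|=n-n/3=2n/3$, so the hypotheses hold. This gives $T\in\mathcal T_1$ with $T\cap X\subseteq P^c\cap X$, $P^c\setminus X\subseteq T\setminus X$ and $|T\cap X|\ge n/3$. Put $S'=T^c\in\mathcal S_2$. Complementing each condition (complementation commutes with intersecting with $X$ or with $X^c$, relative to those sets) gives the following:
\begin{enumerate}
\item $T\cap X\subseteq X\setminus P$ becomes $P\cap X\subseteq S'\cap X$;
\item $X^c\setminus P\subseteq X^c\cap T$ becomes $S'\setminus X\subseteq P\setminus X$;
\item $|S'\cap X|=n-|T\cap X|\le 2n/3$.
\end{enumerate}

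For property~2, take $P$ with $|P|=N-n/3$ and $X\subseteq P$, $|X|=n$. Then $|P^c|=n/3$ and $P^c\cap X=\emptyset$, so property~1 of Lemma~\ref{lem:pivot_points} applies. It gives $S\in\mathcal S_1$ with $P^c\cap X\subseteq S\cap X$, $S\setminus X\subseteq P^c\setminus X$ and $|S\cap X|\le n/3$. Put $T'=S^c\in\mathcal T_2$. The same complementation gives $T'\cap X\subseteq P\cap X$, $P\setminus X\subseteq T'\setminus X$ and $|T'\cap X|\ge 2n/3$.

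The only real care is the bookkeeping in the complementation: inside $X$ complement relative to $X$, and outside $X$ relative to $X^c$. We must also check that the sizes $N/2$, $2n/3$ and $n/3$ land exactly on the hypotheses of Lemma~\ref{lem:pivot_points}. There is no genuine obstacle; the whole difficulty lies in Lemma~\ref{lem:pivot_points} itself.
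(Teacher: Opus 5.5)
Your proof is correct and is essentially the paper's own argument: the paper also sets $\mathcal S_2=\{[N]\setminus T:T\in\mathcal T_1\}$ and $\mathcal T_2=\{[N]\setminus S:S\in\mathcal S_1\}$, applies part~2 (respectively part~1) of Lemma~\ref{lem:pivot_points} to $[N]\setminus P$, and translates each condition by complementing inside $X$ and inside $[N]\setminus X$, exactly as you do. You also verify condition~(a) of part~2, which the paper leaves implicit because it is automatic when $X\subseteq P$.
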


These two lemmas are enough to now construct a red-blue colouring of $Q_N$ without a monochromatic copy of $Q_n$.

\begin{proof}[Proof of Theorem~\ref{thm:qnqn_lower}]
Recall that $N=(2+\frac{1}{3})n$, $c=\frac{1}{3}$, and $n$ is a multiple of 60, large enough such that Lemma~\ref{lem:pivot_points} and Lemma~\ref{lem:above_pivots} apply. As mentioned above, all fractions below are in fact positive integers.

Therefore, by Lemma~\ref{lem:pivot_points} and Lemma~\ref{lem:above_pivots}, we have 4 families of elements, $\mathcal S_1,\mathcal T_1,\mathcal S_2, \mathcal T_2$ lying on the $s$-level, $t$-level, $(N-s)$-level and $(N-t)$-level of $Q_N$, respectively. We will call a set in any of these families a \textit{pivot}. We are going to use their existence to modify a layered colouring of $Q_N$ depending on certain up-sets and down-sets corresponding to these pivots. 

More precisely, we now colour $Q_N$ as follows. Let $A\in Q_N$.
\begin{itemize}
\item If $|A|<\frac{n}{3}$, we colour $A$ {\color{blue}{blue}}.
\item If $\frac{n}{3}\leq |A|\le \frac{2n}{3}+\frac{cn}{4}$, we colour $A$ {\color{blue}blue} if there exists $S\in\mathcal S_1$ such that $S\subseteq A$, and {\color{red}red} otherwise.
\item If $\frac{2n}{3}+\frac{cn}{4}<|A|\leq N/2$, we colour $A$ {\color{red}{red}} if there exists $T\in\mathcal T_1$ such that $A\subseteq T$, and {\color{blue}blue} otherwise.
\item If $N/2<|A|\leq N-\frac{2n}{3}-\frac{cn}{4}$, we colour $A$ {\color{blue}blue} if there exists $S\in\mathcal S_2$ such that $S\subseteq A$, and {\color{red}red} otherwise.
\item If $N-\frac{2n}{3}-\frac{cn}{4}< |A|\leq N-\frac{n}{3}$, we colour $A$ {\color{red}red} if there exists $T\in\mathcal T_2$ such that $A\subseteq T$, and {\color{blue}blue} otherwise.
\item If $N-\frac{n}{3}<|A|$, we colour $A$ {\color{red}red}.
\end{itemize}
This colouring is illustrated in the picture below.
\begin{center}
\includegraphics[width=10.5cm]{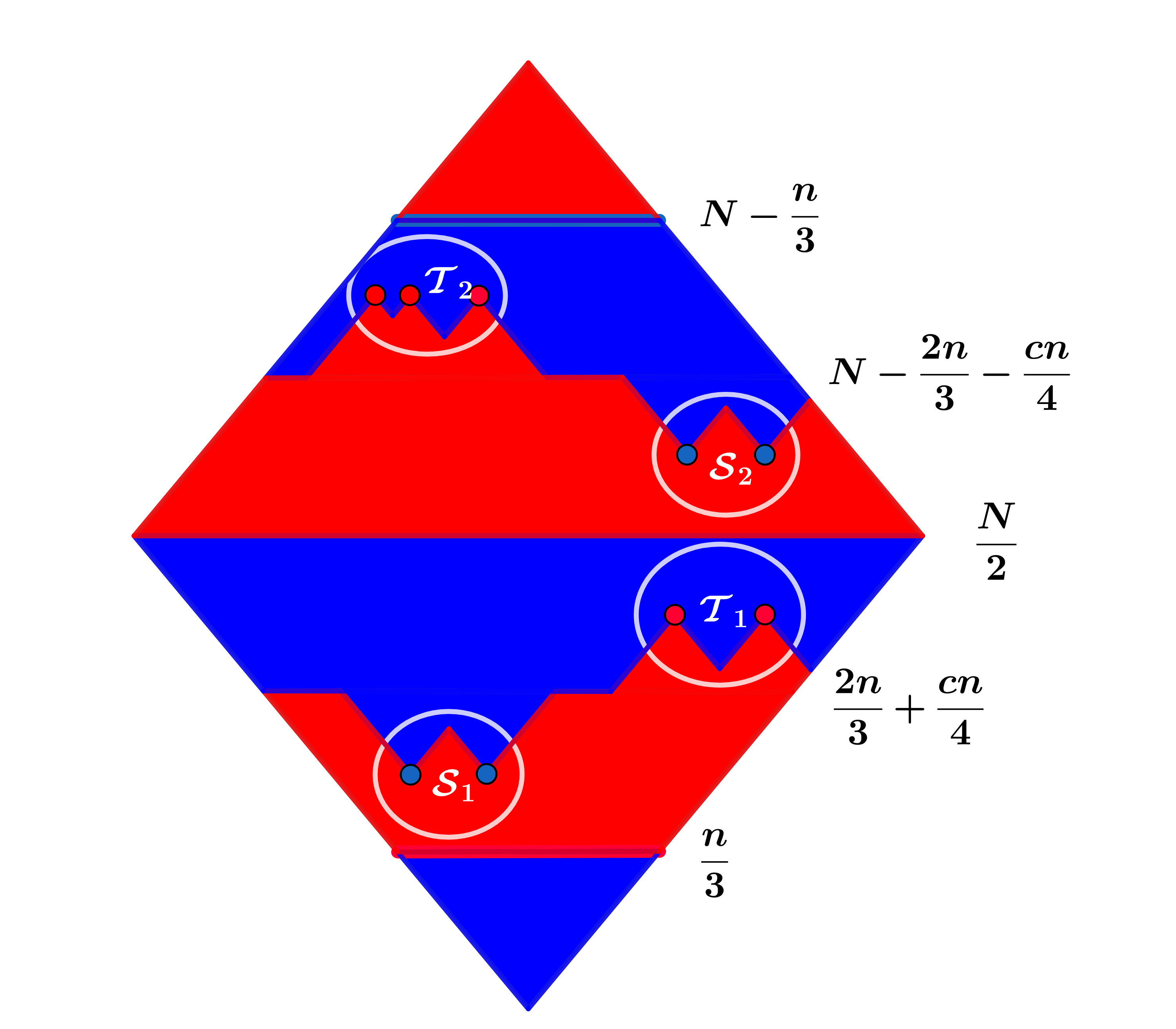}
\end{center}
Suppose now that there exists an embedding $\phi:Q_n\to Q_N$ such that its image is red. By Lemma~\ref{lem:embedding}, there exists $X\subseteq [N]$ and an embedding $\phi':\mathcal Q({X})\to Q_N$ such that $|X|=n$, $\phi'(S)\cap X=S$ for all $S\subseteq X$, and the image of $\phi$ is precisely the image of $\phi'$. Therefore $\phi'(S)$ is red for all $S\subseteq X$.

In particular, $\phi'(\emptyset)$ is red. Since in our colouring all sets of size less than $\frac{n}{3}$ are blue, we must have $|{\phi'(\emptyset)}|\ge\frac{n}{3}$. Let $P_1\subseteq\phi'(\emptyset)$ such that $|{P_1}|=\frac{n}{3}$. We notice that $P_1\cap X\subseteq \phi'(\emptyset)\cap X=\emptyset$. Therefore, by Lemma~\ref{lem:pivot_points}, part 1, there exists a pivot $S_1\in\mathcal S_1$ such that:
\begin{enumerate}
\item $P_1\cap X\subseteq S_1\cap X$
\item $S_1\setminus X\subseteq P_1\setminus X$
\item $|{S_1\cap X}|\le \frac{n}{3}$.
\end{enumerate}
We now have that $S_1=(S_1\cap X)\cup(S_1\setminus X)$. By the embedding property we have that $S_1\cap X=\phi'(S_1\cap X)\cap X$. Combining this with the second property of being a pivot, we have that $$S_1\subseteq (\phi'(S_1\cap X)\cap X)\cup (P_1\setminus X).$$
Next, since $P_1\subseteq\phi'(\emptyset)$, we of course have that $(\phi'(S_1\cap X)\cap X)\cup (P_1\setminus X)\subseteq (\phi'(S_1\cap X)\cap X)\cup (\phi'(\emptyset)\setminus X)$. Moreover, since $\phi'$ is a hypercube embedding, we have that $\phi'(\emptyset)\subseteq\phi'(S_1\cap X)$. Putting everything together, we get $$S_1\subseteq (\phi'(S_1\cap X)\cap X)\cup (\phi'(S_1\cap X)\setminus X)=\phi'(S_1\cap X).$$

In particular, this implies that $S_1\cap X\neq\emptyset$. Since $S_1\in\mathcal S_1$ is a pivot, we coloured it and everything above it of size at most $\frac{2n}{3}+\frac{cn}{4}$ blue. Moreover, since no element of $\mathcal S_1$ is a subset of any element of $\mathcal T_1$, we also have that all sets above $S_1$ of size less than $\frac{N}{2}$ are blue. Therefore we must have that $|\phi'(S_1\cap X)|\geq\frac{N}{2}$. As before, let $P_2\subseteq \phi'(S_1\cap X)$ such that $|{P_2}|=\frac{N}{2}$. We now see that $P_2\cap X\subseteq \phi'(S_1\cap X)\cap X=S_1\cap X$, thus $|P_2\cap X|\leq |S_1\cap X|\leq\frac{n}{3}$.
We want $P_2\cap X$ to have exactly $\frac{n}{3}$ elements, so define $P_2'$ by adding arbitrary elements from $X\setminus P_2$ to $P_2$ such that $|P_2'\cap X|=\frac{n}{3}$, and removing elements from $P_2\setminus X$ so that $|P_2'|=\frac{N}{2}$.
Therefore, by Lemma~\ref{lem:above_pivots}, part 1, there exists a pivot $S_2\in\mathcal S_2$ such that:
\begin{enumerate}
\item $P_2\cap X\subseteq P_2'\cap X\subseteq S_2\cap X$
\item $S_2\setminus X\subseteq P_2'\setminus X\subseteq P_2\setminus X$
\item $|{S_2\cap X}|\leq \frac{2n}{3}$.
\end{enumerate}
Following the same blueprint as above, we similarly get that
$$S_2\subseteq \phi'(S_2\cap X).$$
As before, since $S_2\in\mathcal S_2$ is a pivot, everything above it of size at least $N-\frac{n}{3}$ is blue. Therefore, as $\phi'(S_2\cap X)$ is red, we must have that $|\phi'(S_2\cap X)|>N-\frac{n}{3}$. 

On the other hand, we have that $|S_2\cap X|\le \frac{2n}{3}$, which implies that $|X\setminus S_2|\geq\frac{n}{3}$. Finally, since $\phi'(S_2\cap X)\cap (X\setminus S_2)=\emptyset$, we have that $|\phi'(S_2\cap X)|\leq N-|X\setminus S_2|\le N-\frac{n}{3}$, a contradiction. Therefore there exists no red embedding.
    
By the symmetry of the colouring and of the conditions of Lemma~\ref{lem:pivot_points} and Lemma~\ref{lem:above_pivots} (under taking complements), there also does not exist a blue embedding. The argument will start with looking at the top of the embedding, and taking a superset of it of size $N-\frac{n}{3}$. We then go down the layers, using the families of pivots $\mathcal T_1$ and $\mathcal T_2$, in the same way we went up the layers for the red embedding, using the families of pivots $\mathcal S_1$ and $\mathcal S_2$. Eventually we will find some $A\in\ Q_N$ such that $\frac{n}{3}\leq |A|<\frac{n}{3}$, which gives the desired contradiction, and finishes the proof.
\end{proof}
\section{Existence of pivot sets}
In this section we prove in full Lemma~\ref{lem:pivot_points} and Lemma~\ref{lem:above_pivots}. We first focus on Lemma~\ref{lem:pivot_points} and then, by taking complements, we establish Lemma~\ref{lem:above_pivots}. To refresh the reader's memory, we restate them.
\setcounter{theorem}{3}
\begin{lemma}
For large enough $n$, divisible by 60, there exist sets $\mathcal S_1\subseteq[N]^{(s)}$ and $\mathcal T_1\subseteq [N]^{(t)}$ with no $S\in\mathcal S_1$ and $T\in\mathcal T_1$ such that $S\subseteq T$, and the following properties:
\begin{enumerate}
\item For any $P,X\subseteq[N]$ with $|P|=\frac{n}{3}$, $|X|=n$ and $P\cap X=\emptyset$, there exists $S\in\mathcal S_1$ such that:
\begin{enumerate}
\item $P\cap X\subseteq S\cap X$
\item $S\setminus X\subseteq P\setminus X$
\item $|{S\cap X}|\leq\frac{n}{3}$.
\end{enumerate}
\item For any $P,X\subseteq[N]$ with $|P|=\frac{N}{2}=n+\frac{cn}{2}$, $|X|=n$, and $|{P\cap X}|= \frac{2n}{3}$, there exists some $T\in \mathcal T_1$ such that:
\begin{enumerate}
\item $T\cap X\subseteq P\cap X$
\item $P\setminus X\subseteq T\setminus X$
\item $|{T\cap X}|\ge \frac{n}{3}$.
\end{enumerate}
\end{enumerate}
\end{lemma}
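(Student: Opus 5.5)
The plan is to reduce both properties to statements about single sets, then choose $\mathcal S_1$ at random and take $\mathcal T_1$ to be everything it allows. Recall $N=\frac{7n}{3}$, $s=\frac{2n}{3}$ and $t=(\frac56+h)n$ with $h=0.05$.

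\textbf{Reduction.} First I simplify the two properties.
\begin{itemize}
\item In part 1 we have $P\cap X=\emptyset$, so (b) forces $S\setminus X\subseteq P$, hence $|S\setminus X|\le \frac n3$. Since $|S|=s=\frac{2n}3$, (c) then forces $S\setminus X=P$ and $|S\cap X|=\frac n3$. So part 1 says exactly: for every such pair $(P,X)$, $\mathcal S_1$ contains some set $P\cup Y$ with $Y\in X^{(n/3)}$. There are $\binom{n}{n/3}=2^{(H(1/3)+o(1))n}\approx 2^{0.918n}$ candidates for each pair.
\item In part 2, $|P\setminus X|=\frac n2$. An admissible $T$ is $T=(P\setminus X)\cup Z\cup W$, where $Z\subseteq P\cap X$ has $|Z|\ge \frac n3$ and $W\subseteq [N]\setminus(X\cup P)$. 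The ground set for $W$ has $\frac{5n}{6}$ elements, and $|Z|+|W|=t-\frac n2$.
\end{itemize}

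\textbf{Construction.} I choose $\mathcal T_1$ to be the set of all $t$-sets containing no member of $\mathcal S_1$. The non-containment condition then holds by definition, and the only issue is the choice of $\mathcal S_1$. I take $\mathcal S_1$ to contain each $s$-set independently with probability $p=2^{-(H(1/3)-\varepsilon)n}$ for a small fixed $\varepsilon>0$. (One could try to derandomise later, e.g.\ via an explicit algebraic family, but that is not needed here.)

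\textbf{Part 1.} For a fixed $(P,X)$, the $\binom{n}{n/3}$ candidates are distinct $s$-sets. So the probability that none is chosen is at most $\exp(-p\binom n{n/3})=\exp(-2^{(\varepsilon-o(1))n})$. This is doubly exponentially small, while there are at most $4^N$ pairs $(P,X)$. The union bound therefore gives part 1 with high probability.

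\textbf{Part 2.} Fix $(P,X)$. A given $t$-set $T$ contains $\binom{t}{s}=2^{(t\,H(s/t)+o(1))n}$ sets of size $s$. Here $s/t\approx 0.755$ and $t\,H(s/t)\approx 0.883\cdot 0.803\approx 0.709n$. So the expected number of members of $\mathcal S_1$ inside $T$ is at most $2^{-(0.918-0.709-\varepsilon)n}$, which is exponentially small. Hence each single candidate $T$ survives into $\mathcal T_1$ with probability $1-2^{-\Omega(n)}$.

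To beat the union bound over $(P,X)$, I choose within the admissible $T$'s for this pair a subfamily $T_1,\dots,T_M$ with pairwise intersections of size $<s$. The events ``$T_i$ contains some member of $\mathcal S_1$'' then depend on disjoint sets of $s$-sets, so they are independent. The probability that every $T_i$ is killed is at most $2^{-\Omega(n)M}$. If $M=2^{\Omega(n)}$ this is doubly exponentially small, and a union bound over the at most $4^N$ pairs finishes the proof.

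\textbf{Main obstacle.} The main difficulty is building this packing. All $T_i$ share the fixed $\frac n2$ elements $P\setminus X$, so only the remaining $t-\frac n2\approx 0.383n$ elements are free. These must satisfy $|T_i\cap T_j|-\frac n2<s-\frac n2=\frac n6$. So I need an exponentially large code of $0.383n$-subsets, drawn from a ground set $(P\cap X)\cup([N]\setminus(X\cup P))$ of size $\frac{2n}3+\frac{5n}6$, with at least $\frac n3$ elements of each set in $P\cap X$, and with pairwise intersections below $\frac n6$.

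A random choice of $2^{\delta n}$ such sets should work. The typical intersection is about $\frac{(0.383n)^2}{1.5n}$, roughly $0.1n<\frac n6$, so a greedy or Gilbert--Varshamov type argument gives $M=2^{\Omega(n)}$. This computation is where the slack $h=0.05$ in $t$ must be checked: the exponent $0.918-t\,H(s/t)$ has to stay positive, and simultaneously the intersection bound has to hold. If the margin turns out too tight, I would instead apply Janson's inequality directly to the family of all admissible $T$'s.
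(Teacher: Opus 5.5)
\textbf{There is a genuine gap in part~2: the packing.} Your dual construction handles non-containment and part~1 correctly, but the packing step in part~2 fails as stated.

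Every admissible $T$ has $T\cap X\subseteq P\cap X$, $|T\cap X|\ge\frac{n}{3}$ and $|P\cap X|=\frac{2n}{3}$. So at least $\frac{n}{3}$ of its roughly $0.383n$ free elements lie in a set of size $\frac{2n}{3}$, and your estimate $(0.383n)^2/(1.5n)\approx 0.1n$ ignores this. Two random admissible sets meet in about $(\frac{n}{3})^2/\frac{2n}{3}=\frac{n}{6}$ elements inside $P\cap X$ alone, plus about $0.003n$ in $[N]\setminus(X\cup P)$. Their overlap is therefore typically \emph{above} the threshold $\frac{n}{6}$.

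In fact no exponential packing exists. Shrink each $Z_i=T_i\cap X$ to size $\frac{n}{3}$ and double count: $\sum_{i\neq j}|Z_i\cap Z_j|=\sum_x d(x)(d(x)-1)\ge \frac{M^2n}{6}-\frac{Mn}{3}$. This contradicts $|Z_i\cap Z_j|\le \frac{n}{6}-1$ unless $M\le\frac{n}{6}+1$. Including the $W$-parts, $M$ is even bounded by an absolute constant, about $70$. So the doubly exponential bound is unavailable.

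Janson's inequality does not rescue this. The survival events have probability close to $1$, and most pairs of admissible sets share at least $s$ elements, so it gives only $e^{-O(1)}$.

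\textbf{What would repair it.} Your argument actually needs much less than an exponential packing. Each admissible $T$ is killed with probability at most $p\binom{t}{s}\le 2^{-(0.208-\varepsilon)n}$, and there are at most $4^N=2^{14n/3}$ pairs $(P,X)$. So by your (correct) independence argument, a fixed number $M=24$ of admissible sets with pairwise intersections below $s$ suffices for small $\varepsilon$.

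Such sets exist, but they must be built by hand rather than at random. Split $P\cap X$ into $\binom{24}{12}$ blocks of nearly equal size, indexed by the $12$-subsets $C\subseteq[24]$. Let $Z_i$ be the union of the blocks with $i\in C$, adjusted by $O(1)$ elements so that $|Z_i|=\frac{n}{3}$. These meet pairwise in $\frac{n}{6}-\frac{n}{138}+O(1)$ elements. Then add sets $W_i\subseteq[N]\setminus(X\cup P)$ of the remaining size (about $0.05n$) with pairwise overlaps below $\frac{n}{138}$; random choices overlap in about $0.003n$, which is fine. Without some explicit near-extremal family like this, part~2 is unproved.

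\textbf{Comparison with the paper.} The paper avoids the issue by randomising $\mathcal T_1$ instead, with $p=(0.525)^n$, and taking $\mathcal S_1$ among the survivors. Its packing then lives in the $s$-cone, where the free parts are $\frac{n}{3}$-subsets of the $n$-set $X$. There the overlap threshold $(\frac13-\frac c2-h)n\approx 0.117n$ exceeds the typical overlap $\frac{n}{9}$. So a greedy packing of size $(1.0002)^n$ exists, and it compensates for a per-set failure probability of only about $(0.9997)^n$.
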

\begin{proof}[Proof of Lemma~\ref{lem:pivot_points}]
We recall that $c=\frac{1}{3}$, $h=0.05n$, $s=\frac{2n}{3}$, and $t=\frac{2n}{3}+\frac{cn}{2}+hn$, and since $n$ is divisible by $60$, $s, t, \frac{cn}{2},hn\in\mathbb N$. Let us start with $\mathcal S'=[N]^{(s)}$. We take $\mathcal T_1$ to be a family constructed by adding each set of size $t$, independently, with probability $p=(0.525)^n$. We will now modify $\mathcal S'$ so that together with $\mathcal T_1$, all conditions in the statement are satisfied. In order to do this, for any pair of sets that satisfies either condition of the lemma, we look at the family of all candidate sets from $\mathcal S'$. We call these families \textit{cones}.

More precisely, given $P,X\subseteq[N]$ such that $|{P}|=\frac{n}{3}$, $|{X}|=n$, and $P\cap X=\emptyset$, we define the \textit{$s$-cone of $P$ and $X$} as follows:
$$\mathcal K_s(P,X)=\bigg\{S\subseteq[N]: |{S}|=s,\,S\setminus X\subseteq P\setminus X,\,|{S\cap X}|\le\frac{n}{3}\bigg\}.$$
Similarly, for any $P,X\subseteq [N]$ such that $|{P}|=\frac{N}{2}=n+\frac{cn}{2}$, $|{X}|=n$, and $|{P\cap X}|= \frac{2n}{3}$, we define the \textit{$t$-cone of $P$ and $X$} as follows:
$$\mathcal K_t(P,X)=\bigg\{T\subseteq[N]:|{T}|=t,\,T\cap X\subseteq P\cap X,\,P\setminus X\subseteq T\setminus X,\,|{T\cap X}|\ge\frac{n}{3}\bigg\}.$$

For readability purposes, from now on we will write $\mathcal K_s$ to mean a cone $\mathcal K_s(P,X)$ for some sets $P$ and $X$ such that $|{P}|=\frac{n}{3}$, $|{X}|=n$, and $P\cap X=\emptyset$. Similarly, we adopt the same shorthand notation for $t$-cones, namely $\mathcal K_t$.

The proof has three building blocks. First, we show that all $s$-cones $\mathcal K_s$ are large, and that with high probability, all intersections $\mathcal K_t\cap\mathcal T$ are large.
        
Next, we want to focus on the property that no element of $\mathcal S_1$ can be a subset of any element of $\mathcal T_1$. Therefore we say that cone $\mathcal K_s$ is \textit{bad} if for all $S\in\mathcal K_s$ there is some $T\in\mathcal T_1$ such that $S\subseteq T$. We then show that for any cone, $\mathbb{P}[\mathcal K_s\text{ is bad}]\leq (0.9997)^{n(1.0002)^n}$.

Finally, we find that with high probability there are no bad cones, which allows us to build legal families $\mathcal S_1$ and $\mathcal T_1$, as claimed.

Before delving into the proof, we establish a useful estimate for the choose function which we will use throughout. We start with the following inequalities. Let $n$ be a positive integer. Then 
$$\sqrt{2\pi}n^{n+0.5}e^{-n}\leq n!\leq en^{n+0.5}e^{-n}.$$ This can be proven relatively quickly by analysing the sequence $a_n=\frac{n!}{e^{-n}n^{n+0.5}}$. By looking at $\log(a_n/a_{n+1})$, one easily finds that the sequence is decreasing. Since $a_1=e$ and, by Stirling's formula, $\lim_{n\to\infty}a_n=\sqrt{2\pi}$, we indeed get $\sqrt{2\pi}\leq \frac{n!}{e^{-n}n^{n+0.5}}\leq e$, as claimed.

Next, using the above inequalities, we have that, given positive constants $d<C$ and a natural number $n$ such that $dn, Cn\in\mathbb N$,

$$O(1) \frac{1}{\sqrt n}\left(\frac{C^C}{d^d(C-d)^{C-d}}\right)^n\leq{Cn\choose dn}=\frac{(Cn)!}{(dn)!(Cn-dn)!}\leq O(1) \frac{1}{\sqrt n}\left(\frac{C^C}{d^d(C-d)^{C-d}}\right)^n.$$
    
In particular, this means that for any $\epsilon>0$, and large enough $n$, we have that:
$$\left(\frac{C^C}{d^d(C-d)^{C-d}}-\epsilon\right)^n<{Cn\choose dn}< \left(\frac{C^C}{d^d(C-d)^{C-d}}+\epsilon\right)^n.$$

In what follows we will assume that $n$ is large enough, thus apply the above inequalities by rounding the numerical values either up or down, depending on whether we want an upper or lower bound.

\begin{claim}\label{clm_1}
For every cone $\mathcal K_s$, $|{\mathcal K_s}|> (1.8898)^n$, and with high probability $|{\mathcal K_t\cap\mathcal T}|>(1.006)^n$ for every cone $\mathcal K_t$.
\end{claim}
\begin{proof}
Let $\mathcal K_s$ be an $s$-cone with the associated sets $P,X$. We want to count the size of the cone. We are looking for sets $S$ of size $\frac{2n}{3}$ such that $S\setminus X\subseteq P\setminus X$ and $|S\cap X|\leq\frac{n}{3}$. Therefore selecting exactly $\frac{n}{3}$ elements of $X$, and taking the union with $P$ gives us a set in the cone. More precisely
$$\bigg\{S:S=P\cup S',\ S'\subset X,\ |S'|=\frac{n}{3}\bigg\}\subseteq\mathcal K_s(P,X).$$
Therefore we have that $|\mathcal K_s|\geq {n\choose n/3}$. By the above inequality, applied with $d=\frac{1}{3}, C=1$, we get that for any $\epsilon>0$, for large enough $n$, $|\mathcal K_s|\geq \left(\frac{1}{(1/3)^{1/3}(2/3)^{2/3}}-\epsilon\right)^n$. Since $\frac{1}{(1/3)^{1/3}(2/3)^{2/3}}=\frac{3}{\sqrt[3]{4}}=1.88988\dots>1.8898$, we indeed get that for large enough $n$, all $s$-cones have size at least $(1.8898)^n$.

\vspace{0.5em}
Next, let $\mathcal K_t$ be a $t$-cone with associated sets $R$ and $Y$. We recall that this means that $|R|=\frac{N}{2}=n+\frac{cn}{2}$, $|Y|=n$, and $|R\cap Y|=\frac{2n}{3}$. Also $t=\frac{2n}{3}+\frac{cn}{2}+hn$.

Again, based on these properties, we will pinpoint a large number of sets that must be in the cone. Indeed, consider sets of size $t$ formed by taking the union of $R\setminus Y$ (which has size $\frac{n}{3}+\frac{cn}{2}$) with $\frac{n}{3}$ elements from $R\cap Y$ (which is possible as $|R\cap Y|=\frac{2n}{3}$), plus at least $hn$ other elements from $[N]\setminus(R\cup Y)$. 

To ensure that this is possible, we must have that $|[N]\setminus(R\cup Y)|\geq t-\frac{n}{3}-|R\setminus Y|$. This is true as $|{[N]\setminus(R\cup Y)}|=N-|{R}|-|{Y}|+|Y\cap R|= (2+c)n-\left(n+\frac{cn}{2}\right)-n+\frac{2n}{3}=\frac{2n}{3}+\frac{cn}{2}=t-hn$. Clearly $t-hn$ is greater than $t-\frac{n}{3}-|R\setminus Y|$. Therefore we have that
$$\left\{T=(R\setminus Y)\cup S'\cup S'':|T|=t,\ |S'|=\frac{n}{3},\ S'\subset R\cap Y,\ S''\subseteq [N]\setminus(R\cup Y)\right\}\subseteq \mathcal K_t(R,Y).$$
Based on this we have a lower bound for $\mathcal K_t$, namely that
$$|{\mathcal K_t}|\geq {2n/3\choose n/3}{2n/3 + cn/2\choose hn}.$$
Therefore, applying our inequality for the choose function, we get that for any $\epsilon>0$, and $n$ large enough
$$|\mathcal K_t|\geq\left(\frac{(2/3)^{2/3}}{(1/3)^{1/3}(1/3)^{1/3}}-\epsilon\right)^n\left(\frac{(2/3+1/6)^{2/3+1/6}}{0.05^{0.05}(2/3+1/6-0.05)^{2/3+1/6-0.05}}-\epsilon\right)^n.$$
Since $\frac{(2/3)^{2/3}}{(1/3)^{1/3}(1/3)^{1/3}}\frac{(2/3+1/6)^{2/3+1/6}}{0.05^{0.05}(2/3+1/6-0.05)^{2/3+1/6-0.05}}=1.917913\dots>1.9179$, we have that for large enough $n$ any $t$-cone has size at least $(1.9179)^n$.

This implies that, for large enough $n$, we have 
$$\mathbb{E}[|{\mathcal K_t\cap\mathcal T|}]=|{\mathcal K_t}|p> (1.9179^n)(0.525^n)> (1.0068)^n>2(1.006^n).$$

To finish the claim, we are left to show that the expected number of $t$-cones whose intersection with $\mathcal T$ is at most $(1.006)^n$ goes to zero as $n$ goes to infinity. To do this, we will make use of the multiplicative Chernoff's inequality which states that for a binomial random variable $X$ and a real number $0<a<1$, the following is true:
 $$\mathbb{P}[X\le (1-a)\mathbb{E}[X]]\leq\exp\left(\frac{-\mathbb{E}[X]a^2}{2}\right).$$
We take $a$ to be $\frac{1}{2}$ and the random variable $|{\mathcal K_t\cap\mathcal T}|$. Thus, we get:
\begin{align*}
\mathbb{P}[|{\mathcal K_t\cap\mathcal T|}\leq(1.006)^n]&\leq\mathbb{P}\left[|{\mathcal K_t\cap\mathcal T|}\leq\left(1-\frac{1}{2}\right)\mathbb{E}[|{\mathcal K_t\cap\mathcal T|}]\right]\\
&\le \exp\left(\frac{-\mathbb{E}[|{\mathcal K_t\cap\mathcal T|}]}{8}\right)\leq \exp\left(\frac{-(1.006)^n}{4}\right).
\end{align*}
Denote by $X_t$ the random variable equal to the number of $t$-cones $\mathcal K_t$ such that $|{\mathcal K_t\cap\mathcal T}|\leq (1.006)^n$. We therefore have that
\begin{align*}
\mathbb{E}[X_t]&=\sum_{\mathcal K_t}\mathbb{P}[|{\mathcal K_t\cap\mathcal T}|\leq (1.006)^n]\\
&\leq \sum_{\substack{X\subseteq[N],\\|{X}|=n}}\sum_{\substack{P\subseteq[N],\\|{P}|=n+cn/2,\\|{P\cap X}|= 2n/3}}\exp\left(\frac{-(1.006)^n}{4}\right)\\
&\le 2^{2N}\exp\left(\frac{-(1.006)^n}{4}\right)\\
&= 2^{4n+2cn}\exp\left(\frac{-(1.006)^n}{4}\right)\to 0\text{ as }n\to\infty.
\end{align*}
Therefore, with high probability, $X_t=0$, i.e. for almost all $t$-cones we have that $|{\mathcal K_t\cap\mathcal T}|>(1.006)^n$, which finishes the proof of the claim.
\end{proof}
    
This shows that, with high probability, our family $\mathcal T$ has non-empty intersection with every $t$-cone $\mathcal K_t$, therefore satisfying the second condition of Lemma~\ref{lem:pivot_points}.

Recall that an $s$-cone $\mathcal K_s$ is said to be bad if for all $S\in\mathcal K_s$ there exists some $T\in\mathcal T$ such that $S\subseteq T$.

We are now going to estimate the probability that any given $s$-cone is bad.
\begin{claim}\label{clm_2}
For any cone $\mathcal K_s$, $\mathbb{P}[\mathcal K_s\text{ is bad}]\leq (0.9997)^{n(1.0002)^n}$.
\end{claim}
\begin{proof}
Let $K_s=K_s(P,X)$ be an $s$-cone. In particular, this means that $|P|=\frac{n}{3}$, $|X|=n$, and $P\cap X=\emptyset$.

Instead of looking at the entire cone, we will evaluate $\mathbb{P}[\mathcal K_s'\text{ is bad}]$ for a suitable subfamily $\mathcal K_s'\subseteq \mathcal K_s$. The construction of the family $\mathcal K_s'$ will be in such a way that the events $\exists_{T\in\mathcal T}[S\subseteq T]$ are pairwise independent for all $S\in\mathcal K_s'$. A clean way to ensure this is to say that no two elements of $\mathcal K'_s$ `see' the same set in the $t$-level of $Q_N$.

Therefore, we define \textit{the $t$-neighbourhood of $S$}, $N_t(S)=\{T\subseteq [N] : |{T}|=t,\, S\subseteq T\}$. We now aim to pick $\mathcal K_s'$ in such a way that the neighbourhoods $N_t(S)$ are pairwise disjoint for $S\in\mathcal K_s'$.
        
Our construction is iterative. We start with some $S_1\in\mathcal K_s$. Then, we delete all the sets in $\mathcal K_s$ whose $t$-neighbourhood overlaps with the $t$-neighbourhood of $S_1$, and then repeat.
        
We now estimate how large the final family, $\mathcal K'_s$, is. The $t$-neighbourhoods of two sets overlap if and only if their union has size at most $t$. Let $A,B\in\mathcal K_s$ such that $|A\cup B|\geq t$. Then, since $|A\cup B|=|A|+|B|-|A\cap B|=2s-|A\cap B|$, we get that $2s-t=\frac{2n}{3}-\frac{cn}{2}-hn\geq|{A\cap B}|$.
        
We first compute how many sets of $K_s$ we have removed in step 1, in other words, how many sets there are in $\mathcal K_s$ such that their intersection with $S_1$ has size at least $\frac{2n}{3}-\frac{cn}{2}-hn$. Recall that $\mathcal K_s=\mathcal K_s(P,X)$, thus any set in $\mathcal K_s$ is comprised of $P$, union a subset of $X$ of size $\frac{n}{3}$. Therefore we count how many subsets of size $\frac{n}{3}$ of $X$ have intersection of size at least $\frac{n}{3}-\frac{cn}{2}-hn$ with $S_1\setminus P$. These will be all the subsets of $X$ we get by removing $i\leq\frac{cn}{2}+hn$ elements from $S_1\cap X=S_1\setminus P$, and adding  $i$ from the part of $X$ disjoint from $S_1$, which has size $\frac{2n}{3}$. Therefore, we remove exactly 
$$\sum_{i=0}^{cn/2+hn}{n/3\choose i}{2n/3\choose i}\le \left(\frac{cn}{2}+hn\right)\max\bigg\{{n/3\choose i}{2n/3\choose i}:0\leq i\leq cn/2+hn\bigg\}\text{ sets}.$$

To determine the maximum of the product ${n/3\choose i}{2n/3\choose i}$ for $0\leq i\leq cn/2+hn$, we will show that the quantity increases with $i$. To do that, we analyse the following ratio, for $i\geq 1$: $$\frac{{n/3\choose i}{2n/3\choose i}}{{n/3\choose i-1}{2n/3\choose i-1}}=\frac{(n/3+1-i)(2n/3+1-i)}{i^2}.$$
The fraction is at least 1 if and only if $\frac{2n^2}{9}+1+n\geq i(2+n)$. Therefore, if $i\leq\frac{2n}{9}$, the ratio is at least 1, hence the quantity is increasing, as claimed. Therefore, since $c/2+h=1/6+0.05<2/9$, we removed at most $$\left(\frac{cn}{2}+hn\right){n/3\choose {cn/2+hn}}{2n/3\choose{cn/2+hn}}\text{ sets.}$$

Next, by our inequalities for this type of binomial coefficients, given $\epsilon>0$, for large enough $n$ we have that $\left(\frac{cn}{2}+hn\right){n/3\choose {cn/2+hn}}{2n/3\choose{cn/2+hn}}$ is at most
$$(1/6+0.05)n\left(\frac{(1/3)^{1/3}(2/3)^{2/3}}{(1/6+0.05)^{2(1/6+0.05)}(1/6-0.05)^{1/6-0.05}(1/2-0.05)^{1/2-0.05}}+\epsilon\right)^n.$$
Since $\frac{(1/3)^{1/3}(2/3)^{2/3}}{(1/6+0.05)^{2(1/6+0.05)}(1/6-0.05)^{1/6-0.05}(1/2-0.05)^{1/2-0.05}}=1.88929\dots$, we get that for large $n$ we remove at most $(1.8893)^n$ sets.

Therefore, at each step of this process, we delete at most $(1.8893)^n$ elements of $\mathcal K_s$, which gives us that, for large enough $n$, we have
$$|{\mathcal K_s'}|\geq\frac{|{\mathcal K_s}|}{(1.8894)^n}\geq\frac{(1.8898)^n}{(1.8893)^n}\geq(1.0002)^n.$$
Finally, we upper bound the probability that the $s$-cone $\mathcal K_s$ is bad. We first have that, for $\epsilon>0$ small enough, and for $n$ large enough, the size of a $t$-neighbourhood of any $S\in\mathcal K_s$, $|N_t(S)|$, is
$${N-s\choose t-s}={\frac{4n}{3}+cn\choose \frac{cn}{2}+hn}\leq \left(\frac{(4/3+1/3)^{4/3+1/3}}{(1/6+0.05)^{1/6+0.05}(4/3+1/6-0.05)^{4/3+1/6-0.05}}+\epsilon\right)^n\leq (1.9041)^n.$$

Recall that, by constructions, the sets $N_t(S)$ for $S\in\mathcal K_s'$ are disjoint. We therefore get that
$$\mathbb{P}[\mathcal K_s\text{ is bad}]\leq \mathbb{P}[\mathcal K_s'\text{ is bad}]=\prod_{S\in\mathcal K_s'}\mathbb{P}[N_t(S)\cap\mathcal T\neq\emptyset].$$
After a union-bound we get that, for a given $S\in\mathcal K'_s$, $$\mathbb{P}[N_t(S)\cap\mathcal T\neq\emptyset]\leq\sum_{T\in N_t(S)}\mathbb{P}[T\in \mathcal T]=p|N_t(S)|.$$

Putting everything together we have $$\mathbb P[\mathcal K_s\text{ is bad}]\leq\prod_{S\in\mathcal K'_s}|N_t(S)|p\leq\prod_{S\in\mathcal K'_s}(1.9041)^n(0.525)^n\leq(0.9997)^{n|\mathcal K'_s|}.$$
Together with the fact that $|\mathcal K'_s|\geq (1.0002)^n$, we have that $\mathbb P[\mathcal K_s\text{ is bad}]\leq(0.9997)^{n(1.0002)^n}$, which finishes the claim.
\end{proof}
\begin{claim}\label{clm_3} With high probability there are no bad cones. 
\end{claim}
\begin{proof}
Let $X_\text{bad}$ be the random variable counting the number of bad $s$-cones, $\mathcal K_s$. By Claim~\ref{clm_2}, we can upper bound its expectation as follows:
\begin{align*}
\mathbb{E}[X_\text{bad}]=\sum_{\mathcal K_s}\mathbb{P}[\mathcal K_s\text{ is bad}]&\leq \sum_{\substack{X\subseteq[N]\\|{X}|=n}}\sum_{\substack{P\subseteq[N]\\|{P}|=n/3,\\P\cap X=\emptyset}}\mathbb{P}[\mathcal K_s(P,X)\text{ is bad}]\\&\leq 2^{2N}(0.9997)^{n(1.0002)^n}\leq 2^{4n+2cn}(0.9997)^{n(1.0002)^n}.
\end{align*}
Finally, by Markov's inequality, we have that $\mathbb{P}[X_\text{bad}\geq1]\leq\mathbb E[X]\leq 2^{4n+2cn}(0.9997)^{n(1.0002)^n}\to 0$ as $n\to\infty$. In other words, with high probability, $X_\text{bad}=0$, as claimed.
\end{proof}
Finally, we have that by Claim~\ref{clm_1} and Claim~\ref{clm_3}, with high probability, the family $\mathcal T$ does not generate any bad $s$-cones $\mathcal K_s$, and for every $t$-cone $\mathcal K_t$ the intersection $\mathcal T\cap\mathcal K_t$ is non-empty. Since $n$ is taken to be large enough, this implies that there must exist a family $\mathcal T$ with the above properties. Let that be $\mathcal T_1$.
    
Next, by construction, for every cone $s$-cone $\mathcal K_s$, there exists some $S\in\mathcal K_s$ such that $N_t(S)\cap\mathcal T_1=\emptyset$. Let $\mathcal S_1$ be the collection of all such $S$ for all $s$-cones $\mathcal K_s$. Thus, for all $S\in\mathcal S_1$ and $T\in\mathcal T_1$, $S\not\subseteq T$, and\\
$\bullet$ for all $\mathcal K_s$ there exists some $S\in\mathcal K_s\cap\mathcal S_1$, corresponding to the first part of Lemma~\ref{lem:pivot_points},\\
$\bullet$ for all $\mathcal K_t$ there exists some $T\in\mathcal K_t\cap\mathcal T_1$, corresponding to the second part of Lemma~\ref{lem:pivot_points},\\which finishes the proof.
\end{proof}
We now move on to Lemma~\ref{lem:above_pivots}, which follows from Lemma~\ref{lem:pivot_points} by taking complements. For completeness, we restate it below, and check all the conditions.
\setcounter{theorem}{4}
\begin{lemma}
For large enough $n$, there exist sets $\mathcal S_2\subseteq[N]^{(N-t)}$ and $\mathcal T_2\subseteq [N]^{(N-s)}$ with no $S\in\mathcal S_2$ and $T\in\mathcal T_2$ such that $S\subseteq T$, and the following properties:
\begin{enumerate}
\item For any $P,X\subseteq[N]$ with $|P|=\frac{N}{2}=n+\frac{cn}{2}$ and $|X|=n$, and $|P\cap X|= \frac{n}{3}$, there exists $S\in \mathcal S_2$ such that:
\begin{enumerate}
\item $P\cap X\subseteq S\cap X$
\item $S\setminus X\subseteq P\setminus X$
\item $|S\cap X|\leq \frac{2n}{3}$.
\end{enumerate}
\item For any $P,X\subseteq[N]$ with $|P|=N-\frac{n}{3}$, $|X|=n$, and $X\subseteq P$, there exists $T\in\mathcal T_2$ such that:
\begin{enumerate}
\item $T\cap X\subseteq P\cap X$
\item $P\setminus X\subseteq T\setminus X$
\item $|T\cap X|\geq \frac{2n}{3}$.
\end{enumerate}
\end{enumerate}
\end{lemma}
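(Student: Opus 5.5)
Take the families $\mathcal S_1,\mathcal T_1$ given by Lemma~\ref{lem:pivot_points} and pass to complements. Define
$$\mathcal S_2=\{[N]\setminus T: T\in\mathcal T_1\},\qquad \mathcal T_2=\{[N]\setminus S: S\in\mathcal S_1\}.$$
Since $\mathcal T_1\subseteq[N]^{(t)}$ and $\mathcal S_1\subseteq[N]^{(s)}$, we have $\mathcal S_2\subseteq[N]^{(N-t)}$ and $\mathcal T_2\subseteq[N]^{(N-s)}$. Complementation reverses inclusion. Hence $[N]\setminus T\subseteq[N]\setminus S$ would give $S\subseteq T$, which is excluded, so no element of $\mathcal S_2$ lies inside an element of $\mathcal T_2$.

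\emph{Part 1.} Let $P,X$ satisfy $|P|=N/2$, $|X|=n$ and $|P\cap X|=\frac n3$. Set $P^c=[N]\setminus P$. Then $|P^c|=N/2$ and $|P^c\cap X|=n-\frac n3=\frac{2n}{3}$, so part 2 of Lemma~\ref{lem:pivot_points} applies to the pair $(P^c,X)$. It gives $T\in\mathcal T_1$ with
\begin{itemize}
\item $T\cap X\subseteq P^c\cap X$,
\item $P^c\setminus X\subseteq T\setminus X$,
\item $|T\cap X|\ge\frac n3$.
\end{itemize}
Put $S=[N]\setminus T\in\mathcal S_2$; conditions (a)--(c) follow by complementing within $X$ and within $[N]\setminus X$. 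Complementing the first item inside $X$ gives $P\cap X\subseteq S\cap X$. Complementing the second item inside $[N]\setminus X$ gives $S\setminus X\subseteq P\setminus X$. Finally $|S\cap X|=n-|T\cap X|\le\frac{2n}{3}$.

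\emph{Part 2.} Let $|P|=N-\frac n3$ and $X\subseteq P$ with $|X|=n$. Then $|P^c|=\frac n3$ and $P^c\cap X=\emptyset$, so part 1 of Lemma~\ref{lem:pivot_points} gives $S\in\mathcal S_1$ with
\begin{itemize}
\item $P^c\cap X\subseteq S\cap X$,
\item $S\setminus X\subseteq P^c\setminus X$,
\item $|S\cap X|\le\frac n3$.
\end{itemize}
Put $T=[N]\setminus S\in\mathcal T_2$. Complementing the first item within $X$ gives $T\cap X\subseteq P\cap X$; here both sides equal their full complements since $P\cap X=X$, and the inclusion is trivial anyway. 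Complementing the second item within $[N]\setminus X$ gives $P\setminus X\subseteq T\setminus X$. Finally $|T\cap X|=n-|S\cap X|\ge\frac{2n}{3}$.

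There is no real obstacle. The only step needing care is the general identity
$$A\setminus X\subseteq B\setminus X\iff ([N]\setminus B)\setminus X\subseteq([N]\setminus A)\setminus X,$$
together with its analogue inside $X$. These are what allow each of (a)--(c) to transfer under complementation, and they should be checked explicitly for each condition.
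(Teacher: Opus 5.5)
Your proposal is correct and follows essentially the same route as the paper: you define $\mathcal S_2,\mathcal T_2$ as the complements of $\mathcal T_1,\mathcal S_1$. You then derive part 1 from part 2 of Lemma~\ref{lem:pivot_points} applied to $([N]\setminus P, X)$, and part 2 from part 1, with each of (a)--(c) transferring under complementation exactly as in the paper's verification.
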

\begin{proof}[Proof of Lemma~\ref{lem:above_pivots}]
Let $\mathcal S_1\subseteq[N]^{(s)}$ and $\mathcal T_1\subseteq [N]^{(t)}$ be the sets that satisfy Lemma~\ref{lem:pivot_points}. We will show that $\mathcal S_2=\{[N]\setminus T : T\in\mathcal T_1\}$ and $\mathcal T_2=\{[N]\setminus S : S\in\mathcal S_1\}$ satisfy the conditions of Lemma~\ref{lem:above_pivots}. We need to check three properties.

First, there exist no $S\in\mathcal S_2,T\in\mathcal T_2$ such that $S\subseteq T$. Indeed, if $S\subseteq T$ for some $S\in\mathcal S_2, T\in\mathcal T_2$, then $[N]\setminus T\subseteq [N]\setminus S$. However, $[N]\setminus T\in\mathcal S_1$ and $[N]\setminus S\in \mathcal T_1$, a contradiction.

Next, we show that given $P,X\subseteq[N]$ such that $|{P}|=\frac{N}{2}=n+\frac{cn}{2}$, $|{X}|=n$, and $|{P\cap X}|= \frac{n}{3}$, there exists $S\in \mathcal S_2$ such that $P\cap X\subseteq S\cap X$, $S\setminus X\subseteq P\setminus X$, and $|{S\cap X}|\leq \frac{2n}{3}$.

Let $Q=[N]\setminus P$, thus $|{Q}|=\frac{N}{2}$. Moreover, since $Q\cap X=X\setminus P$ and $|P\cap X|=\frac{n}{3}$, we also have that $|Q\cap X|=\frac{2n}{3}$. Therefore, by the second part of Lemma~\ref{lem:pivot_points}, there exists $T\in\mathcal T_1$ such that $T\cap X\subseteq Q\cap X$, $Q\setminus X\subseteq T\setminus X$, and $|{T\cap X}|\geq \frac{n}{3}$.

Set $S=[N]\setminus T\in\mathcal S_2$. First, we have that $P\cap X=X\setminus Q\subseteq X\setminus T=S\cap X$.

Next, $S\setminus X=([N]\setminus T)\setminus X=([N]\setminus X)\setminus (T\setminus X)\subseteq ([N]\setminus X)\setminus (Q\setminus X)=([N]\setminus Q)\setminus X=P\setminus X$.

Finally, since $|T\cap X|\geq\frac{n}{3}$, we clearly have that $|S\cap X|\leq\frac{2n}{3}$. Thus, all conditions of the first part of Lemma~\ref{lem:above_pivots} are satisfied.

For the second part, we proceed similarly. Let $P,X\subseteq[N]$ such that $|{P}|=N-\frac{n}{3}$, $|{X}|=n$ and $X\subseteq P$. Then we show there  exists $T\in\mathcal T_2$ such that $P\setminus X\subseteq T\setminus X$, and $|{T\cap X}|\geq \frac{2n}{3}$.

Let $Q=[N]\setminus P$, thus $|{Q}|=\frac{n}{3}$. Moreover, since $X\subseteq P$, we also have that $Q\cap X=\emptyset$. By the first part of Lemma~\ref{lem:pivot_points}, there exists $S\in\mathcal{S}_1$ such that $S\setminus X\subseteq Q\setminus X$, and $|{S\cap X}|\leq \frac{n}{3}$.

Set $T=[N]\setminus S\in \mathcal T_2$. Since $|S\cap X|\leq\frac{n}{3}$, we of course have that $|T\cap X|\geq\frac{2n}{3}$. 

Lastly, $P\setminus X=([N]\setminus Q)\setminus X=([N]\setminus X)\setminus(Q\setminus X)\subseteq([N]\setminus X)\setminus(S\setminus X)=([N]\setminus S)\setminus X=T\setminus X$.

Thus the conditions of the second part of Lemma~\ref{lem:above_pivots} are also satisfied, which finishes the proof.
\end{proof}

\section{Proof of the main result}
In this section we show that $R(Q_n, Q_n)\geq2.7n+k$ for some constant $k$. All ideas of the proof have already been presented in great detail in the proof of Theorem~\ref{thm:qnqn_lower} -- we will increase the number of layers of the initial colouring, and for each pair of consecutive layers, we  modify the colouring with pivot sets, just as above. As such, we present the outline of the proof, as well as the Python code used to find the various numerical values.

We first analyse the above proof and the key facts that made all computations go through. 

The construction starts with a 6-layered colouring. Recall that this means sets of same size receive the same colour, and the number of colour transitions, while going upwards (or downwards) in the hypercube, plus one, is the number of layers of the colouring. If one wishes to keep the same starting point (the first and last layer), the only other quantities that could be varied are $c$ and $h$. Therefore, we view the proof as an optimisation problem: we have non-negative variables $c,h$ and we wish to maximise $c$, while ensuring all steps of the proof remain valid. 

It is clear that, as long as Lemma~\ref{lem:pivot_points} remains true for our new $c$ and $h$, the rest of the proof follows in the exact same way. Going through the proof of Lemma~\ref{lem:pivot_points} carefully, we identify the exact constraints on $c$ and $h$ that must be preserved.

We first look where the choice of the probability $p=q^n$, where $q$ is a constant between 0 and 1, has played a role. We see that in Claim~\ref{clm_1} we needed $|{\mathcal K_t}|q>1$, and in Claim~\ref{clm_2} we needed $1>|{N_t}|q$, for all $t$-neighbourhoods. Therefore, such choice of $p$ is possible if and only if $|{\mathcal K_t}|>|{N_t}|$. Using the bound we derived in the proofs of these claims, we can translate this into a condition $c$ and $h$ must satisfy, namely
$${2n/3\choose n/3}{2n/3 + cn/2\choose hn}>{\frac{4n}{3}+cn\choose \frac{cn}{2}+hn}.$$
We refer to this as the \textit{probability constraint}.

The other, non-probabilistic constraint appears in Claim~\ref{clm_2}, during the construction of $\mathcal K_s'$. Recall that at each step we discarded the sets that had large intersection with at least one of the sets already chosen. We require that the number of elements we remove is exponentially smaller than $|{\mathcal K_s}|\geq {n\choose n/3}$. We established that, as long as $c/2+h<\frac{2}{9}$, the number of elements we remove is at most $$\bigg(\frac{cn}{2}+hn\bigg){n/3\choose{cn/2+hn}}{2n/3\choose{cn/2+hn}}.$$ A quick check shows that, for fixed $i\leq\frac{2}{9}$, $(in){n/3\choose{in}}{2n/3\choose{in}}$ is exponentially smaller than ${n\choose n/3}$. Therefore, the restriction $c/2+h<\frac{2}{9}$ guarantees the validity of this part of the proof. We refer to this as the \textit{intersection constraint}.

The intersection constraint gives a direct upper bound for this method, namely $c<\frac{4}{9}$. It also means that increasing $h$ comes at the cost of $c$. On the other hand, the probability constraint is where we can use $h$ to increase $c$. This is because increasing $h$ slightly above zero increases the left hand side more than the right hand side, thus allowing for larger $c$, while still not violating the probability constraint. Nevertheless, this approach is somehow optimised, and cannot result in an upper bound of more than $(2+\frac{4}{9})n$, as already explained.

However, another approach is to increase the number of layers of the colouring we are starting with, and then we get a much better bound. Below we explain in detail the general strategy and limitations, as well as the following verified result, for which the parameters have been found using the code presented in the Appendix.
\setcounter{theorem}{0}
\begin{theorem}\label{bestbound}
There exists a constant $k$ such that $R(Q_n,Q_n)>2.7n+k$.
\end{theorem}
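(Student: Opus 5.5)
We generalise the proof of Theorem~\ref{thm:qnqn_lower} from a $6$-layered colouring to a $2m+2$-layered one, with $m$ large (around $300$ pairs, i.e.\ roughly $602$ layers). As there, we first reduce to $n$ divisible by a suitable integer $D$ (depending on the chosen rational parameters). All parameters below are then integer multiples of $n$. This costs only an additive constant $k$.

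\textbf{Colouring.} We take $N=(2+c)n$. The bottom $\alpha_0 n$ levels are blue and, symmetrically, the top $\alpha_0 n$ levels are red. The middle is split into $m$ consecutive pairs of layers, pair $j$ occupying levels between $a_j n$ and $a_{j+1}n$, with an interior switching level $b_j n$. Inside pair $j$ we choose pivot families $\mathcal S^{(j)}$ on level $s_j$ and $\mathcal T^{(j)}$ on level $t_j$, where $s_j\le b_j n< t_j$ and no $S\in\mathcal S^{(j)}$ lies below any $T\in\mathcal T^{(j)}$. Sets in the lower half of the pair are coloured blue iff they contain some $S\in\mathcal S^{(j)}$. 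Sets in the upper half are coloured red iff they lie below some $T\in\mathcal T^{(j)}$. Pairs above the middle level are the complements of those below, as in Lemma~\ref{lem:above_pivots}.

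\textbf{Red embedding.} By Lemma~\ref{lem:embedding} a red copy of $Q_n$ becomes $\phi'$ with $\phi'(A)\cap X=A$. We climb pair by pair, maintaining a set $A_j\subseteq X$ with $|A_j|\le x_j n$ and $|\phi'(A_j)|\ge a_j n$. Take $P\subseteq\phi'(A_j)$ of size $a_jn$, padded as in the proof of Theorem~\ref{thm:qnqn_lower} so that $|P\cap X|=x_jn$. The pivot property of $\mathcal S^{(j)}$ gives $S\subseteq\phi'(S\cap X)$ with $|S\cap X|\le x_{j+1}n$. Since $\phi'(S\cap X)$ is red while everything above $S$ is blue up to level $a_{j+1}n$, we get $|\phi'(S\cap X)|\ge a_{j+1}n$. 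At the top we reach a red set missing at least $(1-x_m)n$ elements of $X$ while having size above $N-\alpha_0 n$, which is a contradiction once $x_m+\alpha_0\le 1$. Blue embeddings are handled by complementation.

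\textbf{Pivot lemma.} For each pair we need the analogue of Lemma~\ref{lem:pivot_points}, proved verbatim with $s$-cones, $t$-cones, random $\mathcal T^{(j)}$ with density $q_j^n$, Claims~\ref{clm_1}--\ref{clm_3}, and the Chernoff and union bounds. The union bound still only costs $2^{O(n)}$ against doubly exponential probabilities. The proof goes through as soon as two explicit entropy inequalities hold for each $j$. The first is the \emph{probability constraint}: the $t$-cone lower bound (a product of binomials determined by $a_j,x_j,t_j$ and the count of free elements outside $P\cup X$) exceeds $|N_t(S)|=\binom{N-s_j}{t_j-s_j}$ exponentially. The second is the \emph{intersection constraint}: the number of cone elements meeting a fixed one in at least $2s_j-t_j$ points is exponentially smaller than the $s$-cone size. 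Both are of the form ``a finite combination of terms $C^C/(d^d(C-d)^{C-d})$ exceeds another''. After this, Lemma~\ref{lem:above_pivots} follows by complementation as before.

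\textbf{Main obstacle.} The real work is choosing the roughly $600$ parameters $(a_j,b_j,s_j,t_j,x_j,h_j)$ so that all constraints hold with $c=0.7$ and they chain consistently: $a_0=\alpha_0$, the $a_j$ increase to the middle level $N/2$, and the $x_j$ increase to at most $1-\alpha_0$. I would find these with a numerical optimiser, as in the Appendix, giving each pair a small gain in $x$ per layer. I would then verify each constraint with rational parameters and a safety margin, using the explicit bounds $\sqrt{2\pi}n^{n+1/2}e^{-n}\le n!\le en^{n+1/2}e^{-n}$, so that every strict exponential-base inequality is certified, say by interval arithmetic, rather than merely floating-point checked.
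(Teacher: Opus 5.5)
Your proposal matches the paper's argument. The paper uses $4L+2=602$ layers with $L=150$. Each pair of layers gets an analogue of Lemma~\ref{lem:pivot_points}, which reduces to the same probability and intersection constraints, and the parameters are then found by numerical optimisation. Your plan to round the parameters to rationals and certify the constraints rigorously is a genuine improvement on the paper's floating-point check with a small margin.

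One small detail, shared with the paper's own proof of Theorem~\ref{thm:qnqn_lower} where the padding elements are called ``arbitrary'': when you pad $P$, choose $P\supseteq A_j$. This is possible since $A_j\subseteq\phi'(A_j)$ and $|A_j|\le x_jn\le a_jn$. It matters because $A_j\subseteq S\cap X$ is what gives $\phi'(A_j)\subseteq\phi'(S\cap X)$, and hence $S\subseteq\phi'(S\cap X)$.
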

\setcounter{theorem}{5}
\begin{proof}[Outline of the proof.]
Let $N=(2+c)n$, $n$ big enough and also such that all the quantities involving a multiple of $n$ are positive integers. The proof starts with a layered colouring of $Q_N$, with $4L+2$ layers. The modification of this colouring is symmetric in the upper half of the hypercube, therefore we focus on the $2L+1$ layers below $N/2$. Let all sets of size less than $b_0n$ be blue, for some $b_0>0$ to be chosen later.

We now group the rest of the layers into $L$ consecutive pairs. For each pair of layers we have an equivalent of Lemma~\ref{lem:pivot_points} that will allow us to modify the colouring in these two layers as before.

Every pair of layers will have the following parameters: a value of $c_i$ indicating how many levels of the hypercube we are forced to skip in that region, and an $h_i$, $r_i$ and $b_i$, the equivalent of $h$, $n/3$ and $n/3$ in the proof of Lemma~\ref{lem:pivot_points}.

More precisely, let $b_0n=l_1n<l_2n<\dots<l_{L+1}n=N/2$ be the levels of the hypercube above the blue sets. For every $1\leq i\leq L$, we find two families of sets $\mathcal S_i$ and $\mathcal T_i$ with the same properties as in Lemma~\ref{lem:pivot_points}, except $s$ is replaced by $s_in=(l_i+r_i)n$, $t$ is replaced by $t_in=(l_i+r_i+c_i+h_i)n$. Moreover, in part 1, `$P\cap X=\emptyset$' is replaced by $|P\cap X|=\sum_{k=1}^{i-1}r_kn$, `$|P|=n/3$' is replaced by $|P|=l_in$, and `$|S\cap X|\leq n/3$' is replaced by $|S\cap X|\leq\sum_{k=1}^{i}r_kn$. Similarly, in part 2, `$|P|=N/2$' is replaced by $|P|=l_{i+1}n$, `$|P\cap X|=2n/3$' is replaced by $|P\cap X|=n-\sum_{k=1}^Lr_kn-\sum_{k=i+1}^Lb_kn$, and `$|T\cap X|\geq n/3$' is replaced by $|T\cap X|\ge n-\sum_{k=1}^Lr_kn-\sum_{k=i}^Lb_kn$. Moreover, $l_{i+1}-l_i=b_i+c_i+r_i$. This is illustrated below.
\begin{center}
\includegraphics[width=25em]{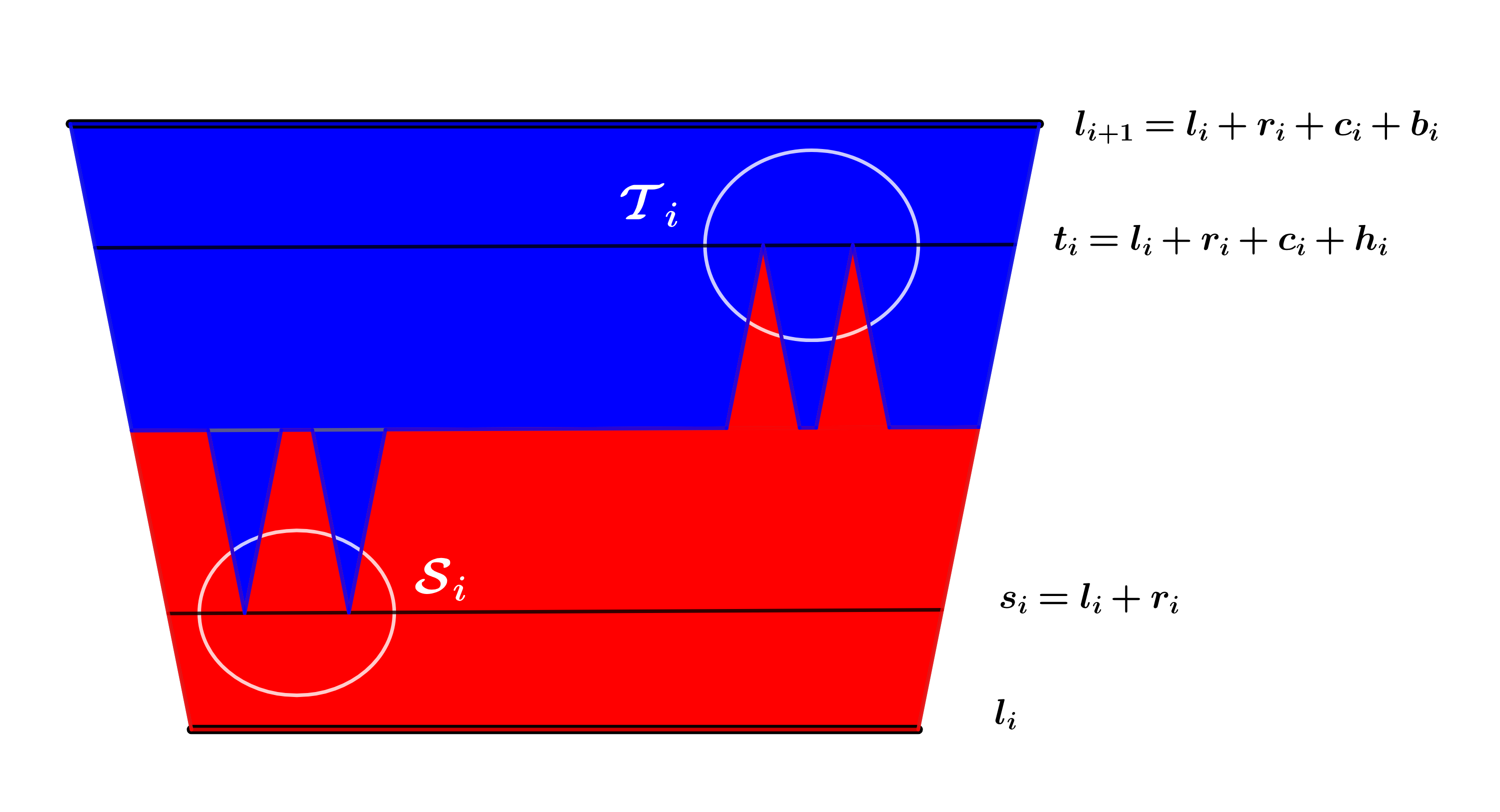}
\end{center}
Our total value for $c$ will then be $c=2\sum_{i=1}^L c_i$.

In each pair of layers, the parameters $r_i$ and $b_i$ tell us how much a monochromatic embedding is allowed to travel in that region. This is captured in Lemma~\ref{lem:pivot_points} by the intersection condition with $X$, the ground set of the embedding. Similarly here, for the $\mathcal S_i$ family, corresponding to working with a red embedding, we start with $P$ such that $|P|=l_in$ and $|P\cap X|=\sum_{k=1}^{i-1}r_kn$, and obtain $S\in\mathcal S_i$ such that $|S\cap X|\leq\sum_{k=1}^{i} r_kn$, thus we travel at most $r_in$ levels. Similarly, for the $\mathcal T_i$ family, corresponding to working with a blue embedding, we start with $P$ such that $|P|=l_{i+1}n$ and $|P\cap X|= n-\sum_{k=1}^Lr_kn-\sum_{k=i+1}^Lb_kn$, and obtain $T\in\mathcal T_i$ such that $|T\cap X|\ge n-\sum_{k=1}^Lr_kn-\sum_{k=i}^Lb_kn$, so again we have a travel of at most $b_in$ levels. The other travel parameter is the starting level, either from the top or the bottom, namely $b_0n$. Noting that a blue travel in a layer becomes a red travel in the complement layer (above $N/2$), we must ensure that the embedding cannot travel all the way to $n$, thus we need
$$b_0+\sum_{i=1}^L r_i+\sum_{i=0}^Lb_i\le 1.$$
The same way as in the proof of Theorem~\ref{thm:qnqn_lower}, we satisfy this condition by simply taking $b_0=r_i=b_i=\frac{1}{2L+1}$ for all $1\leq i\leq n$. Note that this is consistent with $l_{L+1}n=\sum_{i=0}^Lb_{i}n+\sum_{i=1}^Lc_in+\sum_{i=1}^Lr_in=(1+\frac{c}{2})n$.

We now move on to the intersection and probability constraints, and how they change with more layers, particularly in this setup. By design, we will have an instance of each of these constraints for each pair of layers.

For the \textbf{intersection constraint}, we initially had that ${n/3\choose (c/2+h)n}{2n/3\choose (c/2+h)n}$ grows slower than $|{\mathcal K_s}|\ge{n\choose n/3}$. That was to ensure that, whilst making the $t$-neighbourhoods disjoint, we do not remove too many elements for the $s$-cone. In this setup, between $l_in$ and $l_{i+1}n$ an $s_in$-cone will have the form $\{P\cup P':|P'|=r_in, P'\subseteq X\setminus P\}$. Based on this we get $|\mathcal K_{s_in}|\geq{(1-\sum_{k=1}^{i-1}r_k)n\choose r_in}$.

We want to eliminate sets that have union at most $t_in$. Fix $S\in\mathcal K_{s_in}$. We want to delete all sets $S'\in\mathcal K_{s_in}$ such that $|S\cup S'|\leq t_in$, or equivalently $|S\cap S'|\geq l_in+r_in-(c_i+h_i)n$. As before, the elements of an $s_in$-cone are of the form: $P$ union a subset of $X\setminus P$ of size $r_in$. Thus, we count the number of subsets of $X\setminus P$ of size $r_in$ that have intersection of size at least $r_in-(c_i+h_i)n$ with another given subset of size $r_in$. Since $|X\setminus P|=n-(\sum_{k=1}^{i-1}r_k)n$, we remove at most $\sum_{k=1}^{(c_i+h_i)n}{r_in\choose k}{(1-\sum_{k=1}^ir_k)n\choose k}$ elements. Similar computations as before show that the maximal binomial coefficient is when $k=\bigg(r_i-\frac{r_i^2}{1-\sum_{k=1}^{i-1}r_k}\bigg)n.$ 

Therefore we need $c_i+h_i\leq r_i-\frac{r_i^2}{1-\sum_{k=1}^{i-1}r_k}$, and $${r_in\choose (c_i+h_i)n}{(1-\sum_{k=1}^ir_k)n\choose (c_i+h_i)n}<<{(1-\sum_{k=1}^{i-1}r_k)n\choose r_in}.$$

For the \textbf{probability constraint}, we once again want that the size of a $t_in$-cone, $|\mathcal K_{t_in}|$ is strictly greater than the size of a $t_in$-neighbourhood of set of size $s_in$, $|{N_{t_in}}|={N-s_in\choose (t_i-s_i)n}={N-s_in\choose c_in+h_in}$. Before we lower bound the size of a $t_in$-cone, we introduce the following notation: for every $1\leq i\leq L$ we define $B_i=\sum_{k=0}^{i-1}b_k$. Recall that we are working under the assumption that $\sum_{k=0}^Lb_k+\sum_{k=1}^L r_k=1$. 

Therefore, the conditions for the family $\mathcal T_i$ start with $P$ such that $|P|=l_{i+1}n$ and $|P\cap X|=B_{i+1}n$, and obtain $T\in\mathcal T_{i}$ such that $|T\cap X|\geq B_in$. As before, by looking at sets of the form $(P\setminus X)\cup S'\cup S''$ and of size $t_in$, where $S'\subseteq P\cap X$, $|S'|=B_in$, and $S''\subset [N]\setminus(P\cup X)$, we get that a $t_in$-cone has size at least ${B_{i+1}n\choose B_in}{N-l_{i+1}n-n+B_{i+1}n\choose h_in}$, hence our probability constraint is
$${B_{i+1}n\choose B_in}{N-l_{i+1}n-n+B_{i+1}n\choose h_in}>{N-s_in\choose c_in+h_in}.$$

The constraints derived above have been implemented in the code presented in the Appendix, which is a numerical optimiser for the value of $c$. It was also used to find the several numerical values present in the proofs of Theorem~\ref{thm:qnqn_lower} and Lemma~\ref{lem:pivot_points}. The largest value of $c$ we calculated is $0.7$ with $L=150$, which indeed gives that $R(Q_n,Q_n)>2.7n+k$ for some constant $k$, and sufficiently large $n$.
\end{proof}
We end this section with the observation that, under all the constraints presented above, one cannot hope to find $c\geq1$. Recall that  we have taken $r_i=\frac{1}{2L+1}$ for all $1\leq i\leq L$, and, from the intersection constraint, we require $c_i+h_i\leq r_i-\frac{r_i^2}{1-\sum_{k=1}^{i-1}r_k}$. Summing all these inequalities we get
$$\sum_{i=1}^L (c_i+h_i)\leq\sum_{i=1}^L \left(r_i-\frac{r_i^2}{1-\sum_{k=1}^{i-1}r_k}\right).$$
Since $c=2\sum_{i=1}^Lc_i$ and $r_i=\frac{1}{2L+1}$ for all $1\leq i\leq L$, we have that
$$\frac{c}{2}+\sum_{i=1}^Lh_i\leq\frac{L}{2L+1}-\sum_{i=1}^L\frac{\frac{1}{(2L+1)^2}}{1-\frac{i-1}{2L+1}}\leq\frac{L}{2L+1}-L\frac{\frac{1}{(2L+1)^2}}{1-0}=\frac{2L^2}{(2L+1)^2}.$$
Multiplying both sides by 2 we get that $c+2\sum_{i=1}^Lh_i\leq\left(\frac{2L}{2L+1}\right)^2$, and so $c<1$. Furthermore, as $L\to\infty$ the right hand side goes to 1, suggesting that the more layers we work with, the better the bound, with $c$ potentially achieving all values in the interval $[0,1)$. However, we need a better understanding of the probability constraints, and whether the sum of the $h_i$ variables can be made arbitrarily close to zero as $L$ increases, in order to prove this.

\section{Limitations and future directions}
As explained above, it is not entirely clear, although it is believable, that the sum $\sum_{i=1}^Lh_i$ can be legally made to go to 0, as the number of layers, thus $L$, goes to infinity.

Nevertheless, in the proof of Theorem~\ref{bestbound}, we made a convenient, yet potentially very restrictive choice. Namely, we took all $b_i$ and $r_i$ to be equal to $\frac{1}{L+1}$. It is therefore natural to ask whether a different choice could lead to a better bound, one that could even potentially give $c>1$. 

One clue that this avenue could be fruitful comes from noting that the intersection constraints bound $c_i$ in terms of $r_i$, and not in terms of $b_i$. Therefore, increasing the $r_i$ values might make enough room and allow $c$ to go above 1. However, since $\sum_{i=0}^Lb_i+\sum_{i=1}^Lr_i\leq1$, increasing the $r_i$ immediately implies decreasing the $b_i$, so there is a delicate balance to be kept at all times in order for the structure of the proof to hold. At this stage we believe that some numerical work should indicate whether this approach might indeed yield better bounds or whether the other constraints are still too strong to allow for $c\ge 1$. We do however strongly believe the following.
\begin{conjecture}\label{conj:3-e}
Let $\epsilon>0$. Then there exists $L$ and a constant $a$ such that the above type of colouring ($2L$ pairs of layers modified by 2 families of pivots) gives $$R(Q_n,Q_n)\geq(3-\epsilon)n+a.$$   
\end{conjecture}
One could ask if changing the structure of the colouring ever so slightly might be beneficial. In our proof we have shifted the $t_in$ levels up by an extra $h_in$ term, in contrast with the $s_in$ levels. What if we also shift the $s_in$ levels down by a similar term? Surprisingly, this makes all our bounds tighter, thus giving worse results for $c$.

Finally, how suitable is this type of colouring if we change the question to the off-diagonal version, $R(Q_m, Q_n)$? This immediately comes with a lot of broken symmetry, which we heavily relied on while lower bounding $R(Q_n,Q_n)$. For example, we would need a flexible, yet sturdy enough way of estimating binomials of the form ${An+B m\choose pn+qm}$. If one such general estimate is found, the envisioned lower bound will be of the form $(1+\frac{c}{2})(m+n)$.

A different way of keeping all the binomials in terms of only one of $m$ or $n$ is to change the colouring slightly. For example, in $Q_N$, colour the middle $n-m$ levels blue, and the rest of them using our `pivotal' colouring techniques. This approach is tailored, if successful, to give a bound of the form $R(Q_m,Q_n)\ge (1+c)m+n$. 
 
Our feeling is that breaking symmetries is here an inconvenience, and not a true hurdle. Nevertheless, as with the diagonal case, we expect the same limitation, namely that $c<1$.
\bibliographystyle{amsplain}
\bibliography{bib}

\providecommand{\bysame}{\leavevmode\hbox to3em{\hrulefill}\thinspace}
\providecommand{\MR}{\relax\ifhmode\unskip\space\fi MR }
\providecommand{\MRhref}[2]{%
  \href{http://www.ams.org/mathscinet-getitem?mr=#1}{#2}
}
\providecommand{\href}[2]{#2}
\begin{thebibliography}{1}

\bibitem{axenovichBooleanLatticesRamsey2017}
Maria Axenovich and Stefan Walzer, \emph{Boolean {{Lattices}}: {{Ramsey Properties}} and {{Embeddings}}}, Order \textbf{34} (2017-07-01), no.~2, 287--298.

\bibitem{log}
Maria Axenovich and Christian Winter, \emph{Diagonal poset {R}amsey numbers}, Discrete Mathematics \textbf{349} (2026).

\bibitem{constructible}
Tom Bohman and Fei Peng, \emph{A {C}onstruction for {B}oolean {C}ube {R}amsey {N}umbers}, Order \textbf{40} (2023), no.~2, 327--333.

\bibitem{cox}
Christopher Cox and Derrick Stolee, \emph{Ramsey {N}umbers for {P}artially-{O}rdered {S}ets}, Order \textbf{35} (2018), no.~3, 557--579.

\bibitem{off-diagonal}
D{\'a}niel Gr{\'o}sz, Abhishek Methuku, and Casey Tompkins, \emph{Ramsey {N}umbers of {B}oolean {L}attices}, Bulletin of the London Mathematical Society \textbf{55} (2023), no.~2, 914--932.

\bibitem{winter}
Christian Winter, \emph{Erd{\H{o}}s-{H}ajnal {P}roblems for {P}osets}, Order \textbf{42} (2025), 509--527.

\end{thebibliography}
\Addresses

\newpage
\appendix
\section*{Appendix -- Python code to optimise $c$ for large $L$}




In this appendix we explain the Python code used to optimise the parameters that give us Theorem~\ref{bestbound}. This code can also be found at \href{https://github.com/AdriWessels/PosetRamseyLowerBound2026}{this GitHub repository}. The parameters we optimise over are a $c_i$ value and a $h_i$ value for each layer. The code sets up the relevant constraints for these parameters as discussed in the proof outline of Theorem~\ref{bestbound}, and then calls a function from \href{https://scipy.org/}{SciPy} to maximise the total value of $c=2\sum_{i=1}^Lc_i$.

In our proof, $c_i$ and $h_i$ are all coefficients of $n$. Other relevant quantities in the proof can all be represented as some base value raised to the power of $n$, thus we only focus on the base value for these. Moreover, by the symmetry of the construction under complements, we only need to do the calculations for the bottom half of the hypercube.

The first part of the code is our setup. We have the required imports, and define a function which performs the following estimate:
$$\binom{Cn}{dn}\approx \left(\frac{C^C}{d^d(C-d)^{C-d}}\right)^n.$$
\lstset{
    language=Python,
    basicstyle=\ttfamily\small,
    keywordstyle=\color{blue},
    stringstyle=\color{red},
    commentstyle=\color{green!60!black},
    numbers=left,
    showstringspaces=false,
    breaklines=true,           
    breakatwhitespace=true, 
    tabsize=4,
    backgroundcolor=\color{gray!10},
    frame=single,                          
    rulecolor=\color{lightgray},           
    framesep=5pt,                          
    framexleftmargin=1.7em
}

\begin{lstlisting}
from scipy.optimize import minimize
from math import log
import numpy as np
from collections import namedtuple

# Small epsilon to approximate strict inequality
epsilon = 1e-6

# Log everything to avoid floating point precision issues
def choose_base_log(C,d):
	if d <= 0 or d >= C:
		return 0
	return (C * log(C)) - (d * log(d)) - ((C-d) * log(C-d))
\end{lstlisting}

For each layer, we need to know the values corresponding to other layers to be able to calculate the relevant quantities. This includes the bottom level, the top level, and the equivalents of levels $s_in$ and $t_in$. We predefine these values, excluding the contributions from the $c_i$ and $h_i$ parameters.

There is also other data we can predefine. Recall that the proof works by only allowing an embedding to gain at most a certain number of levels in each layer, so that it runs out levels at the end. We must therefore track how many levels we have allowed a red embedding to gain in the lower layers, how many levels we allowed in the upper layers, and how many it is allowed to gain in this layer. In principle, these are also parameters which can be optimised over, but we fixed these to have a total of $n$ levels, divided equally over all layers.

\begin{lstlisting}[firstnumber=15]
# bottom = bottom level of the layer as frac of n, doesn't include contributions from c
# top = top level of layer as frac of n, doesn't include contributions from c
# red_level = what frac of n have we already climbed in previous layers for red at the bottom
# red_climb = what frac of n are we allowed to climb in the red part for this layer
# blue_level = what frac of n have we already climbed in previous layers for blue at the top
# blue_climb = what frac of n are we allowed to climb in the blue part for this layer
Layer = namedtuple("Layer", ["index", "bottom", "top", "red_level", "red_climb", "blue_level", "blue_climb", "num_layers"])

def make_layers(num_layers):
	denom = num_layers * 2 + 1
	layers = []
	for i in range(num_layers):		# loops from bottom of poset to middle
		layers.append(Layer(
			i,				        # index
			(1+2*i)/denom,			# bottom
			(3+2*i)/denom,			# top
			i/denom,				# red_level
			1/denom,				# red_climb
			(2 + i) / denom,		# blue_level
			1/denom,				# blue_climb
			num_layers
		))
	return layers

# top = bottom + red_climb + blue_climb
#           = red_level + blue_level + red_climb
# bottom = red_level + blue_level - blue_climb
\end{lstlisting}

Using the predefined constants and our parameters, we define how to calculate various quantities used in the proof.

\begin{lstlisting}[firstnumber=43]
# Define N
def var_N(vars, layer: Layer):
	c = vars[:layer.num_layers]
	return 2 + 2 * sum(c)

# Define s
def var_s(vars, layer: Layer):
	c = vars[:layer.num_layers]
	return layer.bottom + sum(c[:layer.index]) + layer.red_climb

# Define t
def var_t(vars, layer: Layer):
	c = vars[:layer.num_layers]
	h = vars[layer.num_layers:]
	return var_s(vars, layer) + c[layer.index] + h[layer.index]

# Define the top level of a layer including contributions from c
def var_top(vars, layer: Layer):
	c = vars[:layer.num_layers]
	return layer.top + sum(c[:layer.index + 1])

# Calculate K_s
def Ks_log(vars, layer: Layer):
	return choose_base_log(1 - layer.red_level, layer.red_climb)

# Calculate K_t
def Kt_log(vars, layer: Layer):
	c = vars[:layer.num_layers]
	h = vars[layer.num_layers:]
	return choose_base_log(layer.blue_level, layer.blue_climb) + choose_base_log(var_N(vars, layer) - layer.top - sum(c[:layer.index+1]) + layer.blue_level - 1, h[layer.index])

# Calculate t-neighbourhood of an s-set
def Nt_log(vars, layer: Layer):
	return choose_base_log(var_N(vars, layer)-var_s(vars, layer),var_t(vars, layer)-var_s(vars, layer))

# Calculate how many sets in K_s have union of size at most t with a specified set.
def Nsect_log(vars, layer: Layer):
	c = vars[:layer.num_layers]
	h = vars[layer.num_layers:]
	val = h[layer.index] + c[layer.index]
	return choose_base_log(layer.red_climb, val) + choose_base_log(1 - layer.red_level - layer.red_climb, val)
\end{lstlisting}

We want to define the constraints we require these quantities to stay within. These correspond to those discussed in the proof outline of Theorem~\ref{bestbound}: the intersection constraint and the probability constraint.

\begin{lstlisting}[firstnumber=85]
# We need level t to be inside the layer, so t <= top -> top - t >= 0
# This constraint should be covered by the Nsect_max_constraint, so we don't include it in the optimisation, but we can check it afterwards
def t_lessThan_Top_constraint(vars, layer: Layer):
	c = vars[:layer.num_layers]
	return layer.top + sum(c[:layer.index + 1]) - var_t(vars, layer) - epsilon

# I-Constraint: This comes from finding a large subset of K_s(P,X) with on large intersections
# h+c <= RC - RC^2/(1-RL) -> RC - RC^2/(1-RL) - (h+c) >= 0
def Nsect_max_constraint(vars, layer: Layer):
	c = vars[:layer.num_layers]
	h = vars[layer.num_layers:]
	return layer.red_climb - (layer.red_climb ** 2) / (1 - layer.red_level) - (h[layer.index] + c[layer.index]) - epsilon

# p-constraint: This combines the constraints that 1 <= pt * Kt and pt * Nt <= 1, which is equivalent to Kt/Nt >= 1 -> Kt - Nt >= 0
def room_for_pt_constraint(vars, layer: Layer):
	return Kt_log(vars, layer) - Nt_log(vars, layer) - epsilon

# Constraint: 1 <= K' -> K' - 1 >= 0
# This constraint should be covered by the Nsect_max_constraint, so we don't include it in the optimisation, but we can check it afterwards
def K_bad_subfamily_constraint(vars, layer: Layer):
	return (Ks_log(vars, layer) - Nsect_log(vars, layer)) - epsilon

# Constraint: N - top - n + blue_level >= ht -> N - top - n + blue_level - ht >= 0
# We need this to lower bound K_t in Case A, otherwise we cannot take the extra hn elements
def room_for_h_constraint(vars, layer: Layer):
	h = vars[layer.num_layers:]
	return var_N(vars, layer) - var_top(vars, layer) - 1 + layer.blue_level - h[layer.index] - epsilon

# Some extra constraints which I conjecture are good based on optimal values seen before
# We do not need to check that these hold, but hopefully they help the optimiser

# c_i should be increasing
def c_increasing_constraint(vars, i, num_layers):
	c = vars[:num_layers]
	return c[i+1] - c[i]

# h_i should be decreasing
def h_decreasing_constraint(vars, i, num_layers):
	h = vars[num_layers:]
	return h[i] - h[i+1]
\end{lstlisting}

Finally, we setup our objective function (to maximise $\sum_i c_i$) and run the optimiser in a loop while ensuring all constraints hold.

\begin{lstlisting}[firstnumber=126]
# Objective function: maximise c (equivalent to minimising -c)
def objective(vars, num_layers):
	c = vars[:num_layers]
	return -sum(c)  # Negate since we minimise in scipy

def optimiseForNLayers(num_layers):
	layers = make_layers(num_layers)

	# Initial guess
	starting_point = [0.001] * num_layers   # c
	starting_point += [0.0001] * num_layers # h

	# Bounds
	bounds = [(0, 2)] * num_layers  # c
	bounds += [(0, 1)] * num_layers # h

	# Constraints
	constraints = [{'type': 'ineq', 'fun': room_for_pt_constraint, 'args': (layer,)} for layer in layers]
	constraints += [{'type': 'ineq', 'fun': Nsect_max_constraint, 'args': (layer,)} for layer in layers]
	constraints += [{'type': 'ineq', 'fun': room_for_h_constraint, 'args': (layer,)} for layer in layers]
	constraints += [{'type': 'ineq', 'fun': c_increasing_constraint, 'args': (i,num_layers,)} for i in range(num_layers - 1)]
	constraints += [{'type': 'ineq', 'fun': h_decreasing_constraint, 'args': (i,num_layers,)} for i in range(num_layers - 1)]

	guess = starting_point
	best = 0
	best_vars = None

	for _ in range(3): # Run multiple times in case it fails some of the constraints
		result = minimize(
			objective, 
			guess,
			args=(num_layers,),
			bounds=bounds,
			constraints=constraints,
			method="trust-constr", 
			hess=lambda x, _: np.zeros((len(x), len(x))),
			tol=1e-9
		)

		# Negate to get maximum c, double for reflection across middle layer using complements
		max_c = -result.fun * 2
		optimal_vars = result.x	

		# Make sure the values we got from the optimiser actually satisfy the constraints
		test = True
		for i in range(num_layers):
			layer = layers[i]
			
			good = (room_for_pt_constraint(optimal_vars, layer) > 0)
			test = test and good
			if not good:
				print("Layer ", i, ": ", "room_for_pt_constraint: ", room_for_pt_constraint(optimal_vars, layer))
			
			good = (K_bad_subfamily_constraint(optimal_vars, layer) > -epsilon)
			test = test and good
			if not good:
				print("Layer ", i, ": ", "K_bad_subfamily: ", K_bad_subfamily_constraint(optimal_vars, layer))
			
			good = (Nsect_max_constraint(optimal_vars, layer) > 0)
			test = test and good
			if not good:
				print("Layer ", i, ": ", "Nsect_max: ", Nsect_max_constraint(optimal_vars, layer))
			
			good = (t_lessThan_Top_constraint(optimal_vars, layer) > -epsilon)
			test = test and good
			if not good:
				print("Layer ", i, ": ", "t_lessThan_Top_constraint: ", t_lessThan_Top_constraint(optimal_vars, layer))

            good = (room_for_h_constraint(optimal_vars, layer) > 0)
			test = test and good
			if not good and debug:
				print("Layer ", i, ": ", "room_for_h_constraint: ", room_for_h_constraint(optimal_vars, layer))

		if test:
			print("All constraints good")
			print(max_c)
			print(optimal_vars)
			if max_c > best:
				best = max_c
				best_vars = optimal_vars
		else:
			print("Some constraints off")
		
		guess = optimal_vars # Start the next run of the optimiser from the previous result
	
	return best, best_vars

best, best_vars = optimiseForNLayers(150)

print("#############################################")
print("Best: ", best)
print("Best parameters: ", best_vars)
\end{lstlisting}

Running this code with $L=150$ gives $c>0.7$, which is exactly Theorem~\ref{bestbound}. This corresponds to 602 total layers (300 pairs of layers).

\end{document}